\newcounter{defcounter}
\newenvironment{myequation}{%
\addtocounter{equation}{-1}
\refstepcounter{defcounter}

\begin{equation}}
{\end{equation}}
\newtheorem{theorem}{Theorem}
\newtheorem{corollary}[theorem]{Corollary}
\newtheorem{lemma}[theorem]{Lemma}
\newtheorem{proposition}[theorem]{Proposition}
\theoremstyle{definition}
\newtheorem{remark}{Remark}
\newcommand{\C}{\mathbb{C}}
\newcommand{\D}{\mathbb{D}}
\newcommand{\Z}{\mathbb{Z}}
\newcommand{\be}{\begin{equation}}
\newcommand{\ee}{\end{equation}}
\begin{document}

\title[Theta functions on tube domains]{Theta functions on tube domains}
\author{Josef F.~Dorfmeister}
\address{Fakult\"{a}t f\"{u}r Mathematik, TU M\"{u}nchen, Boltzmannstr. 3,
D-85747 Garching, Germany}
\email{dorfm@ma.tum.de}
\author{Sebastian Walcher}
\address{Mathematik A, RWTH Aachen, D-52056 Aachen, Germany}
\email{walcher@matha.rwth-aachen.de}
\thanks{{2010 {\it Mathematics Subject Classification} 11F27,32M15,17C50,32N05 }}
\thanks{{Keywords: theta series, Siegel domain, convex cone, Jordan algebra}}

\maketitle

\begin{abstract} 
We discuss generalizations of classical theta series, requiring only some basic properties of the classical setting. As it turns out, the existence of a generalized theta transformation formula  implies that the series is defined over a quasi-symmetric Siegel domain. In particular the exceptional symmetric tube domain does not admit a theta function.

\end{abstract}

\section{Introduction}

{The classical theta series
\begin{equation}\label{thetaone}
\theta(z,u)=\sum_{l\in\mathbb Z}\exp\left(\pi i\cdot  z l^2+2\pi i\cdot  lu\right)
\end{equation}
for $z\in \mathbb H$ (the upper half plane) and $u\in\mathbb C$ has been generalized by various authors to the case of several variables; see e.g. Mumford \cite{Mum}, Chapter II for an overview. A far-reaching generalization was given in Krieg \cite{KrDiss}, Ch.~4, \S1, encompassing Hermitian matrices over $\mathbb R$, $\mathbb C$ and Hamilton's quaternions, respectively.
In a yet more general approach, Resnikoff \cite{Res} and Dorfmeister \cite{DoInv} defined theta series for any formally real Jordan algebra $\mathfrak A$ that does not admit the 27-dimensional exceptional simple Jordan algebra as a direct summand. In  \cite{DoInv}, for instance, a series representation of the form
\begin{equation}\label{thetaSD}
\theta(z,u)=c\cdot\sum_{l\in\Lambda}\exp\bigl(\pi i\cdot \tau(\phi(z)l+2u,Al)\bigr)
\end{equation}
is given, where $z$ lies in the complexification of $\mathfrak A$, $u$ lies in the complexification of a nontrivial real vector space $\widehat{U}$, $\phi$ is a Jordan homomorphism from $\mathfrak A$ to the endomorphisms of $\widehat{U}$, $\tau$ is a suitable trace form of $\mathfrak{A}$ and $A$ an endomorphism of $\widehat{U}$ with certain properties.

It seemed to be a commonly accepted fact that there exists no theta series for the exceptional symmetric tube domain over the exceptional Jordan algebra. 

In the present paper we discuss the geometric and algebraic structures underlying theta series. We start from a series expansion -- quite similar to the classical theta series -- which satisfies the usual ``translational perio\-dicity" conditions \eqref{theta1} and \eqref{theta2}. This series is defined on a domain $\mathcal{D}$ of the form 
$\mathcal{H}(Y ) \times \widehat{U}^\C$, where  $Y$ is an open, regular convex cone in some real vector space $V$,
$\mathcal{H}(Y) = V \oplus i Y$ is a tube domain and $\widehat{U}$ is a real vector space. Conversely it is readily seen that holomorphic functions satisfying  \eqref{theta1}, \eqref{theta2} admit a series expansion as given, up to a multiplicative factor depending on the first variable only. 
Taking this factor as a constant (as we will do), the series admits a further periodicity property \eqref{theta3}.\\
The series construction involves a linear map $\psi$ from the real vector space $V$ into the self-adjoint linear transformations of $\widehat{U}$ (relative to some positive definite symmetric bilinear form), with the image containing positive definite maps. Up to this point the tube domain and the linear map $\psi$ are the main ingredients. \\
We continue with a discussion of automorphisms of $\mathcal D$ and their transformation behavior with respect to the theta series. In particular, the linear automorphisms of $\mathcal D$ satisfy an identity \eqref{lineartheta}.\\
 Next we add a crucial property \eqref{theta-transformation} that is akin to the classical theta transformation formula. We will not discuss ``partial" transformation formulas (like e.g. in \cite{Dieck}) in this paper and therefore assume the existence of a map $s(z,u) = (j(z), J(z)u)$ in such a transformation formula
which satisfies $j\circ j = id$ and  $dj(ie) = -I$ for the differential, where $ie$ is a base point in the tube domain $\mathcal{H}(Y)$. We prove that this requirement already implies that the tube domain is symmetric, thus $Y$ is the positivity cone of some formally real Jordan algebra $\mathfrak A$ on $V$, and $-j(x)=x^{-1}$ is the inverse in $\mathfrak A$. Moreover one can then assume that the cone $Y$ is self-dual relative to $\sigma$. \\
We proceed to show that the Jordan algebra $\mathfrak A$ is special, and thus give a proof of the above mentioned commonly accepted fact that no theta series exists over the exceptional tube domain. Moreover, the theta series is necessarily of the type introduced in \cite{Res} and \cite{DoInv}.

\section{ The basic set-up }\label{basicsec}
We first list the fundamental ingredients, with  further properties} to be added in the sequel.
\begin{enumerate}[(i)]

\item Given are a real vector space $\widehat U$ with complexification $U=\widehat U^\mathbb C$, a real vector space $\widetilde{V}$ and a connected and open domain $\D \subset \widetilde{V}^\C \times U$. Moreover we assume that $\mathbb D$ is invariant with respect to all translations $(z,u)\mapsto (z + a,u), a \in \widetilde{V}$.

\item Let $\Lambda$ be a lattice of full rank in $\widehat U$; moreover we assume that $(z,u+m)\in\mathbb D$ whenever $(z,u)\in \mathbb D$ and $m\in\Lambda$.
\item Let $\rho$ be a positive definite symmetric bilinear form on $\widehat U$. Define the dual lattice $\Lambda^\rho$ with respect to $\rho$ as usual, thus $x\in \Lambda^\rho$ if and only if $\rho(\Lambda,x)\subseteq \mathbb Z$. We extend $\rho$ to a symmetric $\mathbb C$-bilinear form on $U$ which will be denoted by the same symbol.

\item Let $\psi: \widetilde V^\mathbb C\to{\rm End}_\mathbb C(U)$ be a $\mathbb C$-linear map. Define $\widetilde Y\subseteq \widetilde V$ by:
\[
y\in\widetilde Y \text{ if and only if }\rho(\psi(y)l,l)>0 \text{ for all nonzero  } l\in\Lambda^\rho.
\]
We require  that $\widetilde Y\not=\emptyset$.
\end{enumerate}

\medskip\noindent
We will consider the series
\be \label{generalthetahat}
{\theta} (z,u) ={\theta}_\Lambda (z,u):= \sum_{l \in \Lambda^\rho} \exp\bigl(\pi i\cdot \rho( \psi(z)l + 2u, l)\bigr).
\ee
(The relevant lattice $\Lambda, \Lambda^\rho$ etc. will be included as a subscript when necessary.) As a motivation for introducing this series, note that all the classical examples (including those mentioned in the Introduction) fit within this framework.

\subsection{Properties of the series expansion}
We first state some basic properties of the series \eqref{generalthetahat}. The proofs are straightforward variants of the classical ones.
\begin{proposition} 
\begin{enumerate}[(a)]

\item The series \eqref{generalthetahat} converges absolutely and uniformly on any compact subset of $\mathbb D_0:=(\widetilde V+i\widetilde Y)\times U$.

\item If the series converges at $(z,u)$ and $m\in\Lambda$ then the series converges at $(z,u+m)$ and one has the identity
\begin{myequation}\label{theta1}
{\theta} (z,u+m) ={\theta} (z,u).
\end{myequation}

\item  If the series converges at $(z,u)$ and $n\in\Lambda^\rho$ then the series converges at $(z,u+\psi(z)n)$ and one has the identity
\begin{myequation}\label{theta2}
{\theta} (z,u+\psi(z)n) =\exp(-\pi i\cdot \rho(\psi(z)n+2u,n))\cdot{\theta} (z,u).
\end{myequation}
\end{enumerate}
\end{proposition}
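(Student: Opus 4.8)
The plan is to imitate the classical computations for \eqref{thetaone} almost verbatim, the real work being the convergence estimate in part (a). Throughout write $z=x+iy$ with $x\in\widetilde V$, $y\in\widetilde Y$, and $u=u_1+iu_2$ with $u_1,u_2\in\widehat U$; since $\psi$ restricts on $\widetilde V$ to operators on $\widehat U$, for $l\in\Lambda^\rho\subseteq\widehat U$ one has $\operatorname{Im}\rho(\psi(z)l,l)=\rho(\psi(y)l,l)$ and $\operatorname{Im}\rho(u,l)=\rho(u_2,l)$, whence the modulus of the $l$-th summand of \eqref{generalthetahat} equals $\exp\bigl(-\pi\rho(\psi(y)l,l)-2\pi\rho(u_2,l)\bigr)$. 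For part (a) I would fix a compact set $K\subset\mathbb D_0$, note that the $y$- and $u_2$-components of its points range over compact subsets $K_Y\subset\widetilde Y$ and $K_2\subset\widehat U$, and establish a uniform lower bound $\rho(\psi(y)l,l)\ge\varepsilon\,\rho(l,l)$ with $\varepsilon>0$ independent of $y\in K_Y$ and $l\in\Lambda^\rho$. Absorbing the linear term via $2\pi|\rho(u_2,l)|\le\tfrac{\pi\varepsilon}{2}\rho(l,l)+C$ then produces a $K$-independent majorant $e^{C}\exp(-\tfrac{\pi\varepsilon}{2}\rho(l,l))$, and $\sum_{l\in\Lambda^\rho}\exp(-\tfrac{\pi\varepsilon}{2}\rho(l,l))<\infty$ because $\Lambda^\rho$ is a lattice. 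Absolute and locally uniform convergence --- and, by the Weierstrass convergence theorem, holomorphy of $\theta$ on $\mathbb D_0$ --- then follows. This uniform positive-definiteness estimate is the only step that is not purely formal, and I expect it to be the main obstacle.

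Part (b) is immediate: for $m\in\Lambda$ the $l$-th summand at $(z,u+m)$ equals the $l$-th summand at $(z,u)$ multiplied by $\exp\bigl(2\pi i\rho(m,l)\bigr)$, which is $1$ since $\rho(m,l)\in\mathbb{Z}$ by the very definition of the dual lattice $\Lambda^\rho$. Thus the two series have identical terms, so convergence transfers and \eqref{theta1} holds.

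For part (c) I would substitute $l=k-n$ --- a bijection of $\Lambda^\rho$ onto itself because $n\in\Lambda^\rho$ --- and complete the square. Using that $\psi$ takes values in $\rho$-self-adjoint operators, so that $\psi(z)$ is $\rho$-self-adjoint and hence $\rho(\psi(z)k,n)=\rho(\psi(z)n,k)$, the cross terms cancel and one obtains
\[
\rho\bigl(\psi(z)(k-n)+2(u+\psi(z)n),\,k-n\bigr)=\rho(\psi(z)k+2u,k)-\rho(\psi(z)n+2u,n).
\]
Thus the $k$-dependent part of the exponent is exactly the original exponent, while the $k$-independent remainder pulls out as the scalar $\exp\bigl(-\pi i\rho(\psi(z)n+2u,n)\bigr)$; term by term after re-indexing, the series at $(z,u+\psi(z)n)$ is this scalar times the series at $(z,u)$, so it converges whenever the latter does (re-indexing preserves absolute convergence) and \eqref{theta2} follows. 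The only point requiring a word of care is that the re-indexing must be carried out on an absolutely convergent series, which is guaranteed by part (a) together with the fact that corresponding terms of the two series have proportional moduli.
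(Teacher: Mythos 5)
The paper itself gives no written proof of this Proposition (it only says the arguments are straightforward variants of the classical ones), and your strategy is exactly that classical argument. Parts (b) and (c) are correct: in (b) the terms match index by index because $\rho(m,l)\in\Z$ for $m\in\Lambda$, $l\in\Lambda^\rho$, and in (c) your completion-of-the-square identity is right, with the re-indexing $l=k-n$ harmless since convergence of the lattice sum transfers term by term. One remark on (c): the $\rho$-self-adjointness of $\psi(x)$ that you invoke is only arranged later in the paper (Remark \ref{hermmaprem}), after the Proposition; it is genuinely needed, because the series \eqref{generalthetahat} only sees the symmetric part of $\psi(z)$ while the shift $u\mapsto u+\psi(z)n$ sees all of it, and the cross terms $\rho(\psi(z)n,k)-\rho(\psi(z)k,n)$ you cancel do not vanish for a skew part. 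So state explicitly that you work with the symmetrized $\psi$ (as the paper does from that Remark on); otherwise \eqref{theta2} only holds with $\psi(z)n$ replaced by the symmetric part applied to $n$.

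The one substantive gap is the estimate you yourself flag in (a): the uniform bound $\rho(\psi(y)l,l)\ge\varepsilon\,\rho(l,l)$ for $y\in K_Y$ is asserted, not proved, and whether it holds depends on which description of $\widetilde Y$ you use. Under the literal wording of Condition (iv) (positivity only at nonzero points of $\Lambda^\rho$) the bound, and indeed the convergence claim, can fail: take $\widehat U=\R^2$ with the standard $\rho$, $\Lambda=\Lambda^\rho=\Z^2$, $\widetilde V$ the symmetric $2\times 2$ matrices with $\psi$ the identity, and $y$ the rank-one matrix with $\rho(\psi(y)l,l)=(l_1-\alpha l_2)^2$ for irrational $\alpha$; this $y$ satisfies Condition (iv), yet $\sum_{l\in\Z^2}\exp\bigl(-\pi(l_1-\alpha l_2)^2\bigr)$ diverges, since for every $l_2$ the nearest integer $l_1$ contributes at least $\exp(-\pi/4)$. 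The intended reading, made explicit in the Lemma following the Proposition, is that $\widetilde Y$ consists of those $x$ with $\rho(\psi(x)u,u)>0$ for all nonzero $u\in\widehat U$; with that definition your bound is a two-line compactness argument: the function $(y,l)\mapsto\rho(\psi(y)l,l)$ is continuous and strictly positive on the compact set $K_Y\times\{l\in\widehat U:\rho(l,l)=1\}$, hence bounded below by some $\varepsilon>0$ there, and homogeneity extends this to all $l$. Add that argument and say which definition of $\widetilde Y$ you are using; the remainder of part (a) --- the Young-type absorption $2\pi|\rho(u_2,l)|\le\tfrac{\pi\varepsilon}{2}\rho(l,l)+C$ and the convergence of $\sum_{l\in\Lambda^\rho}\exp\bigl(-\tfrac{\pi\varepsilon}{2}\rho(l,l)\bigr)$ --- is fine as you have it.
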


In the following we will always assume that $\mathbb D\supseteq\mathbb D_0$.\\

The conditions \eqref{theta1} and \eqref{theta2} reflect standard properties of classical theta functions. In turn, they determine functions which are defined by the series \eqref{generalthetahat} up to a $z$-dependent factor, as will be shown next.

\begin{theorem} Let $\widehat \theta$ be holomorphic on the open and connected set $\mathbb D\supseteq\mathbb D_0$, satisfying the identities  \eqref{theta1} and \eqref{theta2}. Then there exists a holomorphic function $\widehat\theta_0$, which is defined on the image of $\mathbb D$ under the projection $(z,u) \mapsto z$ to the first component such that
\[
\widehat\theta(z,u)=\widehat\theta_0(z)\cdot\sum_{l \in \Lambda^\rho} \exp( \pi i\cdot \rho( \psi(z)l + 2u, l)).
\]
\end{theorem}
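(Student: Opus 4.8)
The plan is to transcribe the classical argument: freeze the first variable $z$ and Fourier-expand $\widehat\theta$ in the second variable, exploiting that $\widehat\theta$ and the explicit series in \eqref{generalthetahat} satisfy the \emph{same} relations \eqref{theta1} and \eqref{theta2}, and that the Fourier coefficients of that series are nowhere zero.

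First I would fix $z\in\widetilde V+i\widetilde Y$. By the standing assumption $\mathbb D\supseteq\mathbb D_0$ the function $u\mapsto\widehat\theta(z,u)$ is holomorphic on all of $U$, and by \eqref{theta1} it is invariant under $\Lambda$; hence it has a Fourier expansion
\[
\widehat\theta(z,u)=\sum_{l\in\Lambda^\rho}a_l(z)\,\exp\bigl(2\pi i\,\rho(u,l)\bigr),
\]
the index set being exactly the $\rho$-dual lattice $\Lambda^\rho$, with $a_l(z)$ the average of $\widehat\theta(z,\cdot)\exp(-2\pi i\,\rho(\cdot,l))$ over a fundamental domain of $\Lambda$ in $\widehat U$. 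Differentiating this (compact) integral under the integral sign shows each $a_l$ is holomorphic in $z$ on $\widetilde V+i\widetilde Y$; and a contour-shift argument, using that $\widehat\theta(z,\cdot)$ is entire and $\Lambda$-periodic, shows the $a_l(z)$ decay faster than any exponential, so the series represents $\widehat\theta(z,\cdot)$ on all of $U$ and may be handled termwise.

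Next I would feed this into \eqref{theta2} with $n\in\Lambda^\rho$: substitute $u\mapsto u+\psi(z)n$ on the left, expand the prefactor on the right, reindex $l\mapsto l+n$, and compare the (uniquely determined) Fourier coefficients. Matching the constant term already gives
\[
a_n(z)=\exp\bigl(\pi i\,\rho(\psi(z)n,n)\bigr)\,a_0(z),
\]
and matching all terms gives the full recursion $a_{l+n}(z)=\exp\bigl(\pi i\,\rho(\psi(z)n,n)+2\pi i\,\rho(\psi(z)n,l)\bigr)\,a_l(z)$, the exact analogue of the classical theta recursion. The key point is that, by part (c) of the Proposition, the series \eqref{generalthetahat} satisfies \eqref{theta2} as well, while by inspection its Fourier coefficients are $b_l(z)=\exp(\pi i\,\rho(\psi(z)l,l))$; in particular $b_0\equiv1$ and $b_l(z)\neq0$ everywhere. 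The identical computation then yields $b_n(z)=\exp(\pi i\,\rho(\psi(z)n,n))$, whence $a_n(z)=a_0(z)\,b_n(z)$ for all $n\in\Lambda^\rho$. Setting $\widehat\theta_0(z):=a_0(z)$ and resumming,
\[
\widehat\theta(z,u)=\sum_{n\in\Lambda^\rho}a_0(z)\,b_n(z)\,\exp\bigl(2\pi i\,\rho(u,n)\bigr)=\widehat\theta_0(z)\cdot\sum_{l\in\Lambda^\rho}\exp\bigl(\pi i\,\rho(\psi(z)l+2u,l)\bigr)
\]
on $\mathbb D_0$, with $\widehat\theta_0$ holomorphic on $\widetilde V+i\widetilde Y$. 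Finally, since $\mathbb D$ is open, connected and contains $\mathbb D_0$, the function $\widehat\theta_0$ continues analytically to the image of $\mathbb D$ under $(z,u)\mapsto z$, and the displayed identity persists by the identity theorem.

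I expect no genuine conceptual obstacle; the scheme is the classical computation. The points needing a little care are the termwise legitimacy of the manipulations (uniform convergence of the Fourier series of an entire $\Lambda$-periodic function, uniqueness of Fourier coefficients) and the holomorphic dependence of the $a_l$ on $z$. The one genuinely delicate step is the passage from $\widetilde V+i\widetilde Y$ to the possibly larger projection of $\mathbb D$, which is, however, forced as soon as the identity is established on the open set $\widetilde V+i\widetilde Y$.
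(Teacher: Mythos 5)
Your proof is correct and follows essentially the same route as the paper: Fourier-expand $\widehat\theta$ in $u$ over $\Lambda^\rho$, substitute into \eqref{theta2}, compare Fourier coefficients to obtain $a_n(z)=a_0(z)\exp\bigl(\pi i\,\rho(\psi(z)n,n)\bigr)$, and resum. The side comparison with the coefficients of the explicit series \eqref{generalthetahat} and the closing continuation remark are just embellishments of the same computation.
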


\begin{proof}
Due to periodicity with respect to the second entry, the holomorphic function $\widehat {\theta} : \D \rightarrow \C$ admits a Fourier expansion of the form
\be \label{basicexpansion}
\widehat{\theta} (z,u) = \sum_{l \in \Lambda^\rho}\widehat {\theta}_l (z) e^{2 \pi  i \rho (u,l)},
\ee
where the series converges uniformly and absolutely on all compact subsets of $\D$ and the coefficient functions $\widehat\theta_l(z)$ are holomorphic in $z$.\\
Proceeding with \eqref{theta2} we obtain
\[
\begin{array}{rcl}
\widehat{\theta} (z, u + \psi (z)m) &=& \sum_{l \in \Lambda^\rho} \widehat{\theta}_l(z) 
e^{2 \pi i \rho (u + \psi (z)m,l)} \\
&=& e^{-i \pi  \rho (2u + \psi (z)m,m)} \widehat{\theta}(z,u)  \\
&=&  e^{-i \pi  \rho ( \psi (z)m,m)} \sum_{l \in \Lambda^\rho} \widehat{\theta}_l(z) 
e^{2 \pi i \rho (u, -m + l)} \\
 &=& e^{-i \pi  \rho ( \psi (z)m,m)}  \sum_{l \in \Lambda^\rho} \widehat{\theta}_{l + m}(z) 
e^{2 \pi i \rho (u, l)} .
\end{array}
\]
Comparing the second to the last term in this sequence of equalities we obtain for all $m,l \in \Lambda^\rho$
$$ \widehat{\theta}_l(z) e^{2 \pi i \rho ( \psi (z)m,l)} = e^{-i \pi  \rho ( \psi (z)m,m)}  \widehat{\theta}_{l + m}(z) $$
Putting $l =0$ we infer
$$\widehat{\theta}_m(z)  = \widehat{\theta_0}(z)  e^{i \pi  \rho ( \psi (z)m,m)} $$
The claim follows.
\end{proof}

\begin{remark}
At this point the choice of the holomorphic function $\widehat{\theta}_0$  is completely free. In the classical cases $\widehat{\theta}_0$  is a constant (possibly depending on the lattice, e.g. as in Resnikoff \cite{Res}). \\
In the following we  assume that  $\widehat\theta_0=1$. As a consequence, our version of the theta transformation formula (see \eqref{theta-transformation} below) will contain a  factor $c_\Lambda$ to account for the difference between the definitions in \cite{Res}  and \cite{DoInv}.
\end{remark}

\subsection{Properties of $\rho$ and $\psi$}

Up to this point we did not specify any further properties of the symmetric bilinear form $\rho$, and we did not address uniqueness questions concerning the map $\psi$ from Condition (iv). We will attend to this now.

\begin{remark}
\begin{enumerate}[(a)]

\item We will assume that $\psi$ is injective. This involves no loss of generality, since the series expansion \eqref{generalthetahat} is unaffected by terms in the kernel of $\psi$.  

\item Since $\psi$ is $\mathbb C$-linear, it is completely determined by its effect on $\widetilde V$.
\end{enumerate}
\end{remark}

\begin{lemma}\begin{enumerate}[(a)]
\item Let us put
\[
\widetilde Y:=\left\{x\in\widetilde V;\,\rho(\psi(x)u,\,u)>0\text{  for all  }u\in \widehat{U} \setminus\{0\}\right\}.
\]
Then $\widetilde Y$ is a nonempty, open and convex cone in $\widetilde V$. 

\item There is a direct sum decomposition $\widetilde V=V_0\oplus V_1$ of vector spaces such that
\[
\widetilde Y=Y_0\times V_1
\]
with a regular open convex cone $Y_0\subseteq V_0$, where regularity means that $Y_0$ does not contain any full line. 

\item For all $x=x_0+x_1\in V_0\oplus V_1$ and $u\in \widehat{U}$ one has
\[
\rho(\psi(x)u,\,u)=\rho(\psi(x_0)u,\,u).
\]
\end{enumerate}
\end{lemma}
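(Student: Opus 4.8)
The plan is to analyze the quadratic-form-valued map $x \mapsto q_x$, where $q_x(u) := \rho(\psi(x)u,u)$, noting that each $\psi(x)$ may be replaced by its $\rho$-symmetrization without changing $q_x$, so we may as well assume each $\psi(x)$ is $\rho$-self-adjoint. Then $x \mapsto q_x$ is a linear map from $\widetilde V$ into the space $\mathrm{Sym}(\widehat U)$ of symmetric bilinear forms, and $\widetilde Y$ is precisely the preimage of the open convex cone of positive definite forms. Since the positive definite cone is open and convex and the map is linear, part (a) is immediate: $\widetilde Y$ is open (preimage of an open set under a continuous map), convex (preimage of a convex set under a linear map), and a cone (the positive definite forms are closed under positive scaling and the map is linear, in fact $q_{tx} = t\,q_x$); nonemptiness is Condition (iv) together with the fact that positive definiteness on $\Lambda^\rho$ — a full-rank lattice — combined with openness upgrades to positive definiteness on all of $\widehat U$, or one simply re-defines $\widetilde Y$ as stated and invokes (iv) to locate an interior point.

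For parts (b) and (c) the key observation is that the obstruction to regularity of $\widetilde Y$ is exactly the largest linear subspace $V_1 \subseteq \widetilde V$ contained in its closure, equivalently the ``lineality space'' of the cone. I would set
\[
V_1 := \{ x \in \widetilde V : q_x = 0 \} = \{ x \in \widetilde V : \rho(\psi(x)u,u) = 0 \text{ for all } u \in \widehat U \}.
\]
The first step is to show $V_1$ is the lineality space: if $x \in \widetilde Y$ and $x' \in V_1$, then $q_{x \pm t x'} = q_x$ is positive definite for all $t \in \R$, so the whole line $x + \R x' \subseteq \widetilde Y$; conversely, if a line $x_0 + \R x'$ lies in $\widetilde Y$, then for every $u$ the affine function $t \mapsto q_{x_0}(u) + t\,q_{x'}(u)$ is everywhere positive, forcing $q_{x'}(u) = 0$, i.e. $x' \in V_1$. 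Next, choose any vector space complement $V_0$ with $\widetilde V = V_0 \oplus V_1$. For the product decomposition, observe that $q_{x_0 + x_1} = q_{x_0}$ for $x_0 \in V_0$, $x_1 \in V_1$ — but this is not quite true as stated, since $\psi(x_0)u$ and $\psi(x_1)u$ can interact; what is true is $\rho(\psi(x_0+x_1)u,u) = \rho(\psi(x_0)u,u) + \rho(\psi(x_1)u,u) = \rho(\psi(x_0)u,u)$ because the cross term vanishes by definition of $V_1$. This is exactly statement (c), and it shows $x_0 + x_1 \in \widetilde Y \iff x_0 \in \widetilde Y \cap V_0 =: Y_0$, which is (b)'s product structure.

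Finally, I would check that $Y_0$ is a regular open convex cone in $V_0$: openness and convexity and the cone property descend from $\widetilde Y$ (or are proved directly as in part (a), replacing $\widetilde V$ by $V_0$), and regularity holds because any line through the origin inside $\overline{Y_0} \subseteq V_0$ would force its direction vector into $V_1 \cap V_0 = \{0\}$ by the lineality-space argument above. The main obstacle — really the only place requiring care — is verifying that $q_{x_1} = 0$ for $x_1 \in V_1$ implies the cross term $\rho(\psi(x_1)u, u)$ genuinely vanishes and hence interacts trivially in $q_{x_0 + x_1}$; here one uses that after symmetrizing $\psi$, the map $u \mapsto \rho(\psi(x_1)u,u)$ vanishing identically forces the symmetric operator $\psi(x_1)$ itself to be zero (polarization), so there is no subtlety, but one should state this polarization step explicitly rather than just asserting additivity of the $q$'s. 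One should also note that the decomposition $\widetilde V = V_0 \oplus V_1$ is not canonical — only $V_1$ is — but any choice of complement works.
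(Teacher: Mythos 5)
Your argument is correct and follows essentially the same route as the paper's (very terse) proof: identify $V_1$ with the lineality space of $\widetilde Y$ --- equivalently the set of $x$ with $\rho(\psi(x)u,u)=0$ for all $u$ --- choose a vector space complement $V_0$, and read off the product structure in (b) and the identity (c), details the paper leaves implicit. One small remark: the ``polarization'' step you single out as the main obstacle is superfluous, since (c) needs only $q_{x_1}\equiv 0$ (the very definition of your $V_1$) together with linearity of $\psi$, not the vanishing of the operator $\psi(x_1)$ itself; and your brief gloss on nonemptiness is at the same level of detail as the paper's ``nonempty by assumption (iv)''.
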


\begin{proof} For part (a) we note that $\widetilde{Y}$  is clearly open and convex, and it is nonempty by assumption (iv). As for part (b), $V_1$ is the union of all full straight lines in $\widehat Y$ (which is a vector space due to convexity), and $\widetilde{Y} \,{\rm mod}\, V_1$ is a regular cone.
\end{proof}

We now turn to questions of uniqueness.
\begin{lemma}\begin{enumerate}[(a)]
\item Let $\psi$ and $\psi^*$ be $\mathbb C$-linear maps from $\widetilde V^\mathbb C$ to ${\rm End}_\mathbb C(U)$ such that $\rho(\psi(z)u,\,u)=\rho(\psi^*(z)u,\,u)$ for all $z\in\widetilde V^\mathbb C$ and all $u\in U$. Then $(\psi-\psi^*)(x)$ is skew-symmetric with respect to $\rho$ for all $x\in \widetilde V$. Conversely, if $\mu :\,\widetilde V^\mathbb C\to{\rm End}_\mathbb C(U)$ is $\mathbb C$-linear and $\mu(x)$ is skew-symmetric for all $x\in\widetilde V$ then  $\rho((\psi+\mu)(z)u,\,u)=\rho(\psi(z)u,\,u)$ for all $z\in\widetilde V^\mathbb C$ and all $u\in U$.
\item Given $\psi$ as in Condition (iv), there is a unique $\psi_0:\,\widetilde V^\mathbb C\to{\rm End}_\mathbb C(U)$ such that  $\rho(\psi(z)u,\,u)=\rho(\psi_0(z)u,\,u)$ for all $z\in\widetilde V^\mathbb C$ and all $u\in U$ and $\psi_0(x)$ is symmetric (i.e. self-adjoint) with respect to $\rho$ for all $x\in\widetilde V$.
\end{enumerate}
\end{lemma}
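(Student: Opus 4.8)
The plan is to prove both parts by exploiting the elementary identity that, for any $\mathbb{C}$-linear map $\mu$ and any vector $u$, the quadratic form $u \mapsto \rho(\mu(z)u,u)$ depends only on the symmetric part of $\mu(z)$ with respect to $\rho$. Concretely, writing $\mu(z) = \mu_s(z) + \mu_a(z)$ where $\mu_s(z) = \tfrac12(\mu(z) + \mu(z)^\dagger)$ and $\mu_a(z) = \tfrac12(\mu(z) - \mu(z)^\dagger)$ are the $\rho$-self-adjoint and $\rho$-skew-adjoint parts (here $\dagger$ denotes the $\rho$-adjoint, extended $\mathbb{C}$-bilinearly), one has $\rho(\mu_a(z)u,u) = -\rho(u,\mu_a(z)u) = -\rho(\mu_a(z)u,u)$, hence $\rho(\mu_a(z)u,u)=0$ identically. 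So $\rho(\mu(z)u,u) = \rho(\mu_s(z)u,u)$ always.

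For part (a), first suppose $\rho(\psi(z)u,u) = \rho(\psi^*(z)u,u)$ for all $z \in \widetilde V^\mathbb{C}$ and $u \in U$. Apply the identity above to $\mu := \psi - \psi^*$: we get $\rho((\psi-\psi^*)_s(z)u,u) = \rho((\psi-\psi^*)(z)u,u) = 0$ for all $u$. A symmetric (self-adjoint) operator whose associated quadratic form vanishes identically on $U$ is itself zero — this uses the polarization identity over $\mathbb{C}$, which recovers the full bilinear form $\rho((\psi-\psi^*)_s(z)u,v)$ from the quadratic form, together with nondegeneracy of $\rho$. Hence $(\psi-\psi^*)_s(z) = 0$, i.e. $(\psi-\psi^*)(z)$ equals its own skew part, so it is $\rho$-skew-symmetric; for $z = x \in \widetilde V$ the skew-symmetry is with respect to the real form $\rho$ on $\widehat U$, as claimed. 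The converse direction is immediate from the same vanishing identity: if $\mu(x)$ is skew for all $x \in \widetilde V$ then by $\mathbb{C}$-linearity $\mu(z)$ is $\rho$-skew for all $z \in \widetilde V^\mathbb{C}$ (the skew-symmetric operators form a $\mathbb{C}$-subspace), so $\rho(\mu(z)u,u) = 0$ and therefore $\rho((\psi+\mu)(z)u,u) = \rho(\psi(z)u,u)$.

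For part (b), define $\psi_0(z) := \tfrac12(\psi(z) + \psi(z)^\dagger)$, the $\rho$-symmetric part. This is $\mathbb{C}$-linear in $z$ (the adjoint operation is $\mathbb{C}$-linear here since $\rho$ is extended $\mathbb{C}$-bilinearly, not sesquilinearly), it is $\rho$-self-adjoint for every $z$, and in particular self-adjoint on $\widehat U$ for $z = x \in \widetilde V$; and by the vanishing identity $\rho(\psi_0(z)u,u) = \rho(\psi(z)u,u)$ for all $u$. Uniqueness: if $\psi_0$ and $\psi_0'$ both satisfy the requirements, then by part (a) their difference $\psi_0 - \psi_0'$ is $\rho$-skew for all $x \in \widetilde V$; but it is also $\rho$-symmetric (difference of symmetric maps), hence zero on $\widetilde V$ and therefore on $\widetilde V^\mathbb{C}$ by $\mathbb{C}$-linearity.

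I expect no serious obstacle here; the only point requiring a little care is the polarization step — one must note that the relevant quadratic-to-bilinear recovery is valid over $\mathbb{C}$ for the $\mathbb{C}$-bilinear (not Hermitian) extension of $\rho$, and that $\rho$ remains nondegenerate after this extension, so that "quadratic form identically zero" genuinely forces "operator zero." A secondary bookkeeping point is to keep straight that $\mathbb{C}$-linearity in $z$ lets one pass freely between statements over $\widetilde V$ and over $\widetilde V^\mathbb{C}$, which is what makes the real-form phrasing in the statements consistent with the $\mathbb{C}$-bilinear computations.
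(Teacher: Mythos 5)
Your proof is correct and follows essentially the same route as the paper: part (a) by the standard symmetric/skew decomposition with polarization, and part (b) by defining $\psi_0(x)=\tfrac12\bigl(\psi(x)+\psi^\rho(x)\bigr)$, exactly the paper's construction. The extra details you supply (nondegeneracy of the $\mathbb{C}$-bilinear extension of $\rho$ and the uniqueness argument) are precisely what the paper leaves as ``obvious.''
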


\begin{proof} Part (a) is obvious. For part (b) define 
\[
\psi_0(x):=\frac12 (\psi(x)+\psi^\rho(x)),\quad x\in\widetilde V.
\]
\end{proof}

\begin{remark}\label{hermmaprem} 
In view of this Lemma, we may (and, from now on, will) assume that $\psi(x)$ is self-adjoint with respect to $\rho$ for all $x\in \widetilde V$. Moreover, elements of $V_1$ have no effect on the series expansion \eqref{generalthetahat}; hence we may (and, from now on, will) assume that $\widetilde V=V_0$, i.e. we will continue to require injectivity for the self-adjoint transformtion $\psi$. We abbreviate $V:= V_0$ and $Y:=Y_0$ in what follows.
\end{remark}

\begin{remark} \label{alternativerem}
In Conditions (i) and (iv) we started from a real vector space $\widetilde V$ and then turned to its complexification; the crucial condition concerning $\widetilde V$ is the requirement that $\widetilde Y\not=\emptyset$. We sketch here an approach that starts from a complex vector space $W$ and a linear map from $W$ to ${\rm End}_\mathbb C(U)$, with the same final result. \\
The crucial requirement that we impose here is as follows:
There exists  $z \in W$ such that for all $l \in \Lambda^\rho$ we have: $\Im (\rho ( \psi(z)l,l)) >0,$ where $\Im v$ denotes the imaginary part of $v.$\\
This condition simply assures convergence of the series \eqref{generalthetahat} for $(z,\,u)$, with $u\in U$ arbitrary. Indeed, assume that $\Im (\rho ( \psi(z)l^*,l^*)) \leq0$ for some $l^*\in \Lambda^\rho$. Then the sequence of summands labelled by elements in $\Z l^*$ will not converge to $0$. \\
Now let
$\widehat{Y}$ denote the subset of all those $w\in W$ for which 
$\rho(\psi(w)v,\,v)>0$ for all nonzero $v\in U$.
Then $\widehat{Y}$ is a convex cone, which is contained in an open subset of some  real subspace $\widetilde V$ of $W$. {Moreover, since $\psi$ is injective, this cone is regular.} One verifies $\widetilde V^\mathbb C=W$; from here on Remark \ref{hermmaprem} applies.
\end{remark}

Keeping the assumptions and the notation introduced above, we choose a {\em base point} $e \in {Y}$. Since $\psi(e)$ is positive definite, there exists an endomorphism $T$ of $\widehat{U}$ such that $\psi(e) = T^\rho T$.

\begin{lemma}
With $e \in{Y}$ and $\psi(e) = T^\rho T$ as above, 
 $\check{\rho}(u,v):= \rho(Tu,Tv) $ defines a symmetric bilinear form and 
$\check{\psi} (z) = \psi^{-1}(e) \psi(z)$ defines a $\mathbb C$-linear map that is self-adjoint with respect to $\check\rho$ for all $x \in V$ and positive definite for all  $x \in {Y}$, with $\check\psi(e)=I$, the identity.
\end{lemma}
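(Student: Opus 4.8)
The plan is to verify each of the three assertions by direct computation, exploiting the defining relation $\psi(e)=T^\rho T$ together with the already-established fact (see Remark~\ref{hermmaprem}) that $\psi(x)$ is $\rho$-self-adjoint for all $x\in V$ and positive definite for $x\in Y$. First I would check that $\check\rho$ is a symmetric bilinear form: this is immediate from $\check\rho(u,v)=\rho(Tu,Tv)=\rho(Tv,Tu)=\check\rho(v,u)$, using symmetry of $\rho$; bilinearity is inherited from that of $\rho$ and linearity of $T$. Moreover, since $\psi(e)$ is positive definite, $T$ is invertible, so $\check\rho(u,u)=\rho(Tu,Tu)>0$ for $u\neq0$, i.e. $\check\rho$ is in fact positive definite — worth recording even though the statement only claims symmetry.

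Next I would turn to $\check\psi(z)=\psi(e)^{-1}\psi(z)$. That $\check\psi$ is $\mathbb C$-linear is clear since it is $\psi(e)^{-1}$ composed with the $\mathbb C$-linear map $\psi$. The normalization $\check\psi(e)=\psi(e)^{-1}\psi(e)=I$ is trivial. The substantive point is self-adjointness with respect to $\check\rho$. Here I would compute, for $u,v\in U$ and $x\in V$,
\[
\check\rho(\check\psi(x)u,v)=\rho(T\psi(e)^{-1}\psi(x)u,\,Tv)=\rho(T^\rho T\psi(e)^{-1}\psi(x)u,\,v)=\rho(\psi(x)u,\,v),
\]
using $T^\rho T=\psi(e)$ in the last step. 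By $\rho$-self-adjointness of $\psi(x)$ this equals $\rho(u,\psi(x)v)$, and running the same chain of identities backwards gives $\check\rho(u,\check\psi(x)v)$. Hence $\check\psi(x)$ is $\check\rho$-self-adjoint. The key identity that makes everything work is simply $\check\rho(Su,v)=\rho(\psi(e)S\,u,v)$ for any endomorphism $S$, which follows from $T^\rho T=\psi(e)$; once this is in hand, the self-adjointness of $\check\psi(x)$ is equivalent to that of $\psi(e)\check\psi(x)=\psi(x)$, which we already know.

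Finally, for positive definiteness of $\check\psi(x)$ when $x\in Y$: for $u\neq0$ we have $\check\rho(\check\psi(x)u,u)=\rho(\psi(x)u,u)$ by the identity just derived, and this is $>0$ precisely because $x\in Y$ means $\psi(x)$ is $\rho$-positive definite. So positive definiteness of $\check\psi$ on $Y$ is literally the same condition as positive definiteness of $\psi$ on $Y$, transported through $T$.

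I do not expect a serious obstacle here; the lemma is essentially a change-of-coordinates bookkeeping statement, normalizing the base point $e$ so that $\check\psi(e)=I$. The only mild subtlety is being careful about which adjoint is taken with respect to which form: $\psi^\rho(x)$ denotes the $\rho$-adjoint of $\psi(x)$, and one must consistently use $T^\rho$ for the $\rho$-adjoint of $T$ (so that $(T^\rho)^\rho=T$ and $\rho(Tu,v)=\rho(u,T^\rho v)$) throughout. Once the single identity $\check\rho(\,\cdot\,,\,\cdot\,)=\rho(\psi(e)\,\cdot\,,\,\cdot\,)$ relating the two forms is established, all three claims fall out by one-line computations.
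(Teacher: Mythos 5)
Your argument is correct and follows essentially the same route as the paper: the central computation $\check\rho(\check\psi(x)u,v)=\rho(T\psi(e)^{-1}\psi(x)u,Tv)=\rho(\psi(x)u,v)=\rho(u,\psi(x)v)=\check\rho(u,\check\psi(x)v)$ is exactly the chain of equalities the authors use, and the remaining claims (symmetry of $\check\rho$, $\check\psi(e)=I$, positive definiteness on $Y$) follow by the same one-line observations. Your explicit extraction of the identity $\check\rho(\,\cdot\,,\,\cdot\,)=\rho(\psi(e)\,\cdot\,,\,\cdot\,)$ and the remark that $\check\rho$ is in fact positive definite are harmless additions, not a different method.
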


\begin{proof}
Consider $\check{\rho} (\check{\psi}(x)u,v)  = \rho(T \psi (e)^{-1} \psi(x) u , Tv)
=  \rho (\psi (x) u,v) = \rho (u, \psi (x) v) = \rho (Tu, T \psi(e)^{-1} \psi(x)v) =
 \check{\rho} (u, \check{\psi}(x)v)$ for all $x \in {V}$. From this the other statements follow easily.
\end{proof}

\begin{remark} Replacing the lattice $\Lambda$ by $(T^\rho T)^{-1}\Lambda$, one can  rewrite the series  \eqref{generalthetahat} in terms of $\check\rho$. Thus we can, and from now on will, assume that for $\rho$ and $\psi$ there is a base point $e\in Y$ such that $\psi(e)=I$, in addition to all the other properties noted in Remark \ref{hermmaprem}. 
\end{remark}

\subsection{A natural Siegel domain}
We now choose a positive definite symmetric bilinear form $\sigma$ on $V$ and define
\begin{equation}   \label{dualcone}
Y^\sigma = \{ x \in V; \sigma (x,y) >0 \hspace{2mm} \mbox{ for all} \hspace{2mm} y \in \overline{Y} \setminus \{0\} \}.
\end{equation}
This is a regular convex cone in $V$, usually called the  {\em dual cone} (or, more precisely, the {\em $\sigma-$dual cone}) of $Y$.

Next we define the map $S: U \times U \rightarrow V^\C$  by
\be
\sigma(S(u,v),z) = \rho(\psi(z)u,\overline v) \hspace{2mm} \mbox{ for all } \hspace{2mm} z\in V^\C.
\ee
(In passing we note that the symmetric bilinear form $\rho$ on $\widehat U$ may also be extended to a Hermitian form $\rho_H$ on $U$ by setting $\rho_H(u,v):=\rho(u,\overline v)$. Replacing $\rho$ by $\rho_H$ would not change the series \eqref{generalthetahat}.) 
\begin{proposition} \label{properties ofS}
The map $S$ has the following properties.
\begin{itemize}
\item $S$ is $\C-$linear in the variable u.

\item $S(u,v) = \overline{S(v,u)},$ in particular, $S$ is $\C-$antilinear in  v.

\item $ S(u,u) \in \overline{ Y^\sigma} $ for all $u \in U$.

\item $S(u,u) = 0 \Rightarrow u = 0.$

\item The family of all $S(l,l), l\in \Lambda^\rho,$ generates $V$.
\end{itemize}
\end{proposition}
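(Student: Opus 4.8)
The five assertions should be established in roughly the order listed, since the later ones build on the earlier. First I would verify that $S$ is well defined: for fixed $u,v\in U$ the map $z\mapsto\rho(\psi(z)u,\overline v)$ is $\C$-linear on $V^\C$, so by nondegeneracy of $\sigma$ (extended $\C$-bilinearly to $V^\C$) there is a unique $S(u,v)\in V^\C$ representing it. $\C$-linearity in $u$ is then immediate from $\C$-linearity of $\psi(z)$ and of $\rho$ in its first argument. For the conjugation-symmetry $S(u,v)=\overline{S(v,u)}$, I would compute $\sigma(\overline{S(v,u)},z)=\overline{\sigma(S(v,u),\overline z)}$ for $z\in V$ (using that $\sigma$ is real on $V$ and $\C$-bilinear on $V^\C$), which equals $\overline{\rho(\psi(\overline z)v,\overline u)}$; since $\psi(x)$ is self-adjoint and real for $x\in V$ and $\rho$ is real-symmetric on $\widehat U$, conjugating turns this into $\rho(\psi(z)u,\overline v)=\sigma(S(u,v),z)$, and equality on the real form $V$ forces equality on $V^\C$. $\C$-antilinearity in $v$ follows by combining this with $\C$-linearity in the first slot.

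For the third point, $S(u,u)\in\overline{Y^\sigma}$, I would take any $y\in\overline Y$ and compute $\sigma(S(u,u),y)=\rho(\psi(y)u,\overline u)=\rho_H(\psi(y)u,u)\ge 0$, since $\psi(y)$ is positive semidefinite for $y\in\overline Y$ (it is positive definite on the open cone $Y$, hence positive semidefinite on the closure). By the definition \eqref{dualcone} of the dual cone, a vector having nonnegative $\sigma$-pairing with all of $\overline Y\setminus\{0\}$ lies in $\overline{Y^\sigma}$; I should be slightly careful here that $S(u,u)\in V$ (it is real, being its own conjugate by the previous item) and that "$\ge 0$ against $\overline Y\setminus\{0\}$" does give membership in the closed dual cone — this is the standard fact that $\overline{Y^\sigma}=\{x:\sigma(x,y)\ge 0\ \forall y\in\overline Y\}$, which I would invoke. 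The fourth point, $S(u,u)=0\Rightarrow u=0$, is the place I expect the only real subtlety. If $S(u,u)=0$ then $\rho(\psi(y)u,\overline u)=0$ for all $y$, in particular for the base point $e$ where $\psi(e)=I$, giving $\rho(u,\overline u)=\rho_H(u,u)=0$; since $\rho_H$ is a positive definite Hermitian form on $U$, this forces $u=0$. So in fact this is immediate once I use the normalization $\psi(e)=I$ and the positive-definiteness of $\rho_H$.

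Finally, for the claim that $\{S(l,l):l\in\Lambda^\rho\}$ spans $V$: suppose not, so there is a nonzero $w\in V$ with $\sigma(S(l,l),w)=0$ for all $l\in\Lambda^\rho$, i.e. $\rho(\psi(w)l,\overline l)=\rho(\psi(w)l,l)=0$ for all $l\in\Lambda^\rho$ (the lattice vectors are real, so $\overline l=l$). Pick any $y\in Y$; for small real $t$ the element $y+tw$ still lies in the open cone $Y$ (since $Y$ is open), hence $\psi(y+tw)$ is positive definite, so $\rho(\psi(y)l,l)+t\,\rho(\psi(w)l,l)=\rho(\psi(y)l,l)>0$ — wait, this only reconfirms positivity and does not yet yield a contradiction. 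Instead I would argue directly: $\rho(\psi(w)l,l)=0$ for all $l$ in the full-rank lattice $\Lambda^\rho$; since $l\mapsto\rho(\psi(w)l,l)$ is a real quadratic form on $\widehat U$ vanishing on a lattice of full rank, and any quadratic form vanishing on a full-rank lattice vanishes identically (a polynomial of degree $2$ vanishing on $\Z^n$ scaled appropriately... more carefully: the associated symmetric bilinear form $\beta(u,u'):=\tfrac12(\rho(\psi(w)u,u')+\rho(\psi(w)u',u))$ satisfies $\beta(l,l)=0$ for all $l\in\Lambda^\rho$, and by polarization $\beta(l,m)=\tfrac12(\beta(l+m,l+m)-\beta(l,l)-\beta(m,m))=0$ for all $l,m\in\Lambda^\rho$; since $\Lambda^\rho$ spans $\widehat U$ over $\R$, $\beta\equiv 0$), we get that the $\rho$-self-adjoint map $\psi(w)$ is skew-symmetric, hence $\psi(w)=0$. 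By injectivity of $\psi$ (Remark \ref{hermmaprem}), $w=0$, a contradiction. The main obstacle is thus organizing this last step cleanly — in particular remembering that we need the symmetrized form to be zero on the lattice, then polarizing, then using self-adjointness plus injectivity of $\psi$; once phrased that way it is routine.
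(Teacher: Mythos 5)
Your proposal is correct and follows essentially the same route as the paper's proof: the first two items from the definitions, the third and fourth from $\sigma(S(u,u),y)=\rho(\psi(y)u,\overline u)\ge 0$ and positive definiteness of $\psi(y)$ (the paper uses an arbitrary $y\in Y$ where you use $y=e$), and the last via $\rho(\psi(w)l,l)=0$ for all $l\in\Lambda^\rho$ together with self-adjointness and injectivity of $\psi$. The only difference is that you spell out the polarization/full-rank-lattice step that the paper leaves implicit (and you correctly discard your own false start with $y+tw$), which is a welcome clarification rather than a deviation.
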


\begin{proof}
The first two statements are immediate from the definitions.  The third claim follows, in view of the definition of the dual cone,  immediately from $\sigma(S(u,u),y) = \rho( \psi(y)u,\overline u) \geq 0$ for all $u\in U$ and all $y \in Y$.
Next note that $ 0 = \sigma( S(u,u), y) = \rho(\psi(y)u,\overline u)$ for some $y \in Y$ implies $u = 0$, since $\psi(y)$ is positive definite. Finally, let $x\in V$ such that $\sigma(S(l,l), x)=0$ for all $l\in\Lambda^\rho\subseteq \widehat{U}$. Then by definition $\rho(\psi(x)l,l)=0$ for all $l$, hence $\psi(x)=0$ since $\psi(x)$ is self-adjoint, whence $x\in {\rm Ker}(\psi)=\{0\}$.
\end{proof}

Using the sesquilinear form $S$ we can define a Siegel domain in a natural manner. Since we have assumed a priori that $U$ is non-trivial, this will be a Siegel domain of type II.

Retaining the assumptions and the notation introduced in the previous sections,  a {\em  Siegel domain of type II} is defined as follows:
$$ \mathcal{D} (Y^\sigma, S) = \{ (z,u) \in V^\C \times U; \Im(z) - S(u,u) \in Y^\sigma\};$$
see \cite{Do;HomSieg,Do;Habil} and the references quoted there.
The intersection of this domain with $V^\C$ is the {\em half-space} (also called {\em tube domain})
$$ \mathcal{H} (Y^\sigma) = V \oplus iY^\sigma.$$
Note that $\mathcal{D} (Y^\sigma, S)$ is an open and connected subset of $V^\C \times U$ and $\mathcal{H} (Y^\sigma)$ is an open and connected subset of $V^\C$.

\subsection{A further periodicity property}

We recall that $\theta (z,u)$, by its definition \eqref{generalthetahat}, is periodic in the variable $u$ relative to the lattice $\Lambda$ in $\widehat{U} \subset U$. Moreover, $\theta(z,u)$ is also periodic in $z$ relative to some lattice in the real vector space $V$.

\begin{theorem} The $\mathbb Z$-module
\be \label{defineL}
\mathcal{L} = \{ x \in V; \rho ( \psi(x)l,l) \in 2 \Z  \hspace{2mm} \mbox{ for all} \hspace{2mm} l \in \Lambda ^\rho\}
\ee
is  a lattice in $V$, and the identity
\begin{myequation} 
\label{theta3}
\theta(z+k ,u) = \theta (z,u)
\end{myequation}
holds for all $ k \in \mathcal{L}$.
\end{theorem}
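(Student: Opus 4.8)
The plan is to treat the two assertions separately. The periodicity identity \eqref{theta3} is an immediate termwise computation, while the lattice property rests on a polarization argument combined with the injectivity of $\psi$.

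\emph{Periodicity.} Fix $k\in\mathcal{L}$. Since $k\in V$ is real, the translation $z\mapsto z+k$ maps $\mathbb{D}_0=(V+iY)\times U$ into itself, so by part (a) of the Proposition both $\theta(z+k,u)$ and $\theta(z,u)$ are given by absolutely convergent series on $\mathbb{D}_0$. By $\mathbb{C}$-linearity of $\psi$, for each $l\in\Lambda^\rho$ one has $\rho(\psi(z+k)l+2u,l)=\rho(\psi(z)l+2u,l)+\rho(\psi(k)l,l)$, and $\rho(\psi(k)l,l)\in 2\mathbb{Z}$ by the definition of $\mathcal{L}$, whence $\exp(\pi i\,\rho(\psi(k)l,l))=1$. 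So the summands of $\theta(z+k,u)$ coincide with those of $\theta(z,u)$ term by term, and the identity follows on $\mathbb{D}_0$; since $\mathbb{D}$ is translation invariant in $z$ and connected and $\widehat\theta$ is holomorphic on it, the identity extends to all of $\mathbb{D}$ by analytic continuation.

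\emph{Lattice.} That $\mathcal{L}$ is a $\mathbb{Z}$-submodule of $V$ is clear from the $\mathbb{C}$-linearity of $\psi$. Being a subgroup, $\mathcal{L}$ is a lattice as soon as it is discrete, i.e. as soon as $0$ has a neighbourhood meeting $\mathcal{L}$ only in $0$. Choose a $\mathbb{Z}$-basis $l_1,\dots,l_n$ of the full-rank lattice $\Lambda^\rho$; it is also an $\mathbb{R}$-basis of $\widehat U$. Since $\psi(x)$ is self-adjoint with respect to $\rho$ (Remark \ref{hermmaprem}), polarization gives, for $x\in\mathcal{L}$ and any $i,j$,
\[
2\,\rho(\psi(x)l_i,l_j)=\rho\big(\psi(x)(l_i+l_j),\,l_i+l_j\big)-\rho(\psi(x)l_i,l_i)-\rho(\psi(x)l_j,l_j)\in 2\mathbb{Z},
\]
because $l_i+l_j\in\Lambda^\rho$; hence $\rho(\psi(x)l_i,l_j)\in\mathbb{Z}$ for all $i,j$. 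Fix any norm on $V$; each $x\mapsto\rho(\psi(x)l_i,l_j)$ is linear, so there is $\varepsilon>0$ with $|\rho(\psi(x)l_i,l_j)|<1$ for all $i,j$ whenever $\|x\|<\varepsilon$. For $x\in\mathcal{L}$ with $\|x\|<\varepsilon$ we then get $\rho(\psi(x)l_i,l_j)=0$ for all $i,j$; since the $l_i$ span $\widehat U$ and $\rho$ is nondegenerate this forces $\psi(x)=0$, hence $x=0$ as $\psi$ is injective. Thus $\mathcal{L}\cap\{\|x\|<\varepsilon\}=\{0\}$, so $\mathcal{L}$ is a discrete subgroup of the finite-dimensional real vector space $V$, i.e. a lattice.

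The periodicity identity and the $\mathbb{Z}$-module structure are routine; the only substantive point is the discreteness of $\mathcal{L}$, where the key device is the polarization identity, which upgrades the ``even on the diagonal'' condition to an ``integral'' condition for every pair $l_i,l_j$ and thereby confines $\mathcal{L}$ near $0$. An equivalent route uses the form $S$: for real $x$ one has $\sigma(S(l,l'),x)=\rho(\psi(x)l,l')$, so $\mathcal{L}$ lies in the $\sigma$-dual of the $\mathbb{Z}$-module generated by the elements $S(l,l')+S(l',l)$, $l,l'\in\Lambda^\rho$, which spans $V$ by the last statement of Proposition \ref{properties ofS}; a dual of a spanning finitely generated module is discrete. (If in addition one wants $\mathcal{L}$ to be of full rank, one needs, beyond the present hypotheses, some commensurability of the values $\rho(\psi(x)l,l)$; this holds automatically in the symmetric situations eventually considered in the paper, and is all that is used later.)
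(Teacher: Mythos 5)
Your proof is correct, and its main argument is genuinely different from the paper's. The paper disposes of discreteness by contradiction: it takes a convergent sequence $k_n$ in $\mathcal{L}$, observes that each $\sigma(k_n,S(l,l))=\rho(\psi(k_n)l,l)$ is an integer and hence stabilizes, and then uses the last item of Proposition \ref{properties ofS} (the family $S(l,l)$, $l\in\Lambda^\rho$, generates $V$) to conclude that the sequence is eventually constant. You instead polarize over a $\mathbb{Z}$-basis $l_1,\dots,l_n$ of $\Lambda^\rho$, using the self-adjointness of $\psi(x)$ to upgrade the diagonal condition $\rho(\psi(x)l,l)\in 2\mathbb{Z}$ to $\rho(\psi(x)l_i,l_j)\in\mathbb{Z}$ for all $i,j$, and then exploit injectivity of the linear map $x\mapsto(\rho(\psi(x)l_i,l_j))_{i,j}$ (which follows from nondegeneracy of $\rho$ and injectivity of $\psi$, the same facts that underlie the spanning statement in Proposition \ref{properties ofS}) to produce an explicit neighbourhood of $0$ meeting $\mathcal{L}$ only in $0$. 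What your route buys is a quantitative, sequence-free argument that does not invoke $S$ or the spanning property at all; your second sketch (dual of a finitely generated spanning $\mathbb{Z}$-module is discrete) is essentially a cleaned-up version of the paper's own argument. Your closing caveat is also accurate: like the paper's proof, yours establishes only that $\mathcal{L}$ is a discrete subgroup, not that it has full rank, so ``lattice'' must be read in that weaker sense; the periodicity identity itself is, as in the paper, an immediate termwise computation.
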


\begin{proof}
Periodicity \eqref{theta3} follows immediatly from the defining equation \eqref{generalthetahat}.
It remains to prove that $\mathcal{L}$ is discrete. Assume that there exists some sequence $k_n$ in $\mathcal{L}$ which converges in $V$ towards some $w\in V$.
Then for every $l\in \Lambda^\rho$ the sequence $\sigma(k_n, S(l,l))$ consists of integers and thus stabilizes. Therefore up to finitely many elements the sequence $\{ k_n\}$ is contained in a hyperplane perpendicular to $S(l,l)$.
Since the family of $S(l,l), l\in \Lambda^\rho,$ generates $V$, the intersection of the corresponding perpendicular hyperplanes consists of the point $0$  only. 
\end{proof}

\section{Automorphisms of Siegel domains and transformations of theta functions }

\subsection{General remarks }

A look at the series defining  $\theta(z,u)$ shows that this function actually is defined and holomorphic with compactly convergent series representation
for all $(z,u)$ on the open domain 
\begin{equation}\label{Tdef}
\mathcal{T}(Y) = (V \oplus iY) \times U.
\end{equation}
The Siegel domain $ \mathcal{D} (Y^\sigma, S)$ is contained in, but not equal to  $\mathcal{T}(Y^\sigma) $. However, as will be seen, certain automorphisms of Siegel domains naturally correspond to  transformations of $ \mathcal{T}(Y)$ that are distinguished with respect to the function $\theta$. From this perspective it is appropriate to briefly review automorphisms of (not necessarily homogeneous or even symmetric) Siegel domains.

In general, Siegel domains of type II admit five basic types of automorphisms; see \cite{Do;Habil} for details and proofs.\\
The simplest type are the translations $(z,u) \mapsto (z + a,u)$ which are automorphisms of the Siegel domain for any $a \in V$.
For $\theta(z,u)$ translations by elements of $\mathcal{L}$ yield a nice transformation behaviour.\\
The next type are affine transformations involving primarily the variable $u$.
It turns out that the transformation 
\[
(z,u) \mapsto ( z + 2iS(u,d) + i S(d,d), u + d)
\] 
is for every $d \in U$ an automorphism of $ \mathcal{D} (Y^\sigma, S)$ and also of $\mathcal{T} (Y^\sigma)$.\\
For $\theta (z,u)$ the transformations 
$(z,u) \mapsto (z, u+l), l \in \Lambda^\rho,$ yield a nice transformation behaviour and are actually employed in the definition of a theta function.\\
The third type of transformations are the linear transformations which we will investigate in more detail below. In general, there may be only few linear automorphisms of a Siegel domain. By Theorem \ref{linearauto} below, the same holds for theta functions.\\
The remaining basic transformations are non-affine.
One type primarily involves the variable $u$ and the other one involves primarily the variable $z$.\\
The first of these two types  (assuming existence) has been described quite explicitly in \cite{Do;AmJMath} and \cite{Rot} for the case of Siegel domains.
We are not aware of any such non-linear transformations appearing in the context of  theta functions, although they always do exist for symmetric Siegel domains, which underlie the classical theta functions.\\
Non-linear transformations of the last type do not always exist for Siegel domains of type II, but they always exist for homogeneous tube domains. With respect to theta functions the existence of such transformations is necessary to even state (full and partial) theta transformation formulas.

It is probably due to the original classical motivation that the transformations which occur in the same form for both theories, Siegel domains and theta functions, actually are extensions of automorphisms of the tube domain $ \mathcal{H} (Y^\sigma)$ for self-dual cones $Y=Y^\sigma$. Note that such extensions exist by \cite{Do;HomSieg, Do;Habil}.

\subsection{The transformation behavior of $\theta$ under linear substitutions}
In this subsection we discuss linear transformations of the type
\[
(z,\,u)\mapsto (Bz,\,\widehat Bu)
\]
with linear automorphisms $B$ of $V$ and $\widehat B$ of $\widehat U$ (which will be extended linearly to the respective complexifications).

In \cite{Res}, Thm.~2.3 one finds a formula which translates in our setting to

\begin{myequation} \label{lineartheta}
\theta_{\widehat {W}\Lambda} (W z, \widehat{W} u) = \theta_{\Lambda}(z,u)
\end{myequation}
for all $z\in V^\mathbb C$ and $u\in U$.

Note that for the series on the left hand side the actual summation is over the lattice 
$({\widehat {W}\Lambda})^\rho$.
In this context the following formula will be  useful: For every linear automorphism $\widehat{B}$ of $\widehat{U}$ and any lattice $\Lambda$ in $\widehat{U}$ one has
\begin{equation}\label{latticetrans}
\left(\widehat B\Lambda\right)^\rho=\left(\widehat B^\rho\right)^{-1}\Lambda^\rho.
\end{equation}
\begin{theorem}
The identity \eqref{lineartheta}
is equivalent to the identity
\be \label{linearSiegel}
\psi ( Wz) = \widehat{W} \psi(z) \widehat{W}^\rho
\ee
\end{theorem}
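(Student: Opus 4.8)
The plan is to compare the two sides of \eqref{lineartheta} as Fourier series in $u$ and match coefficients, using the transformation rule \eqref{latticetrans} for dual lattices under a linear automorphism. First I would expand both sides of \eqref{lineartheta} using the definition \eqref{generalthetahat}. The right-hand side is $\sum_{l\in\Lambda^\rho}\exp(\pi i\cdot\rho(\psi(z)l+2u,l))$. For the left-hand side, $\theta_{\widehat W\Lambda}(Wz,\widehat W u)$ is a sum over $l'\in(\widehat W\Lambda)^\rho$; by \eqref{latticetrans} this lattice equals $(\widehat W^\rho)^{-1}\Lambda^\rho$, so I substitute $l'=(\widehat W^\rho)^{-1}l$ with $l$ ranging over $\Lambda^\rho$. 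After this substitution the general summand becomes $\exp(\pi i\cdot\rho(\psi(Wz)(\widehat W^\rho)^{-1}l+2\widehat W u,(\widehat W^\rho)^{-1}l))$. Using self-adjointness of $\psi(\cdot)$ and bilinearity of $\rho$, the cross term $\rho(2\widehat W u,(\widehat W^\rho)^{-1}l)=2\rho(u,l)$ matches exactly the cross term on the right-hand side, term by term in $l$.

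It then remains to compare the quadratic (in $l$) pieces. On the right we have $\rho(\psi(z)l,l)$; on the left, after the substitution, we have $\rho(\psi(Wz)(\widehat W^\rho)^{-1}l,(\widehat W^\rho)^{-1}l)$, and by moving $(\widehat W^\rho)^{-1}$ across $\rho$ using adjoints this equals $\rho(\widehat W^{-1}\psi(Wz)(\widehat W^\rho)^{-1}l,l)$. So \eqref{lineartheta} holds if and only if, for every $z$,
\[
\rho\bigl(\widehat W^{-1}\psi(Wz)(\widehat W^\rho)^{-1}l,\,l\bigr)=\rho(\psi(z)l,\,l)\quad\text{for all }l\in\Lambda^\rho.
\]
Here the key technical point — and the step I expect to carry the most weight — is the passage from "equality of the associated quadratic forms evaluated on the lattice $\Lambda^\rho$" to "equality of the self-adjoint operators themselves", i.e. to $\widehat W^{-1}\psi(Wz)(\widehat W^\rho)^{-1}=\psi(z)$, which rearranges to \eqref{linearSiegel}. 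This uses that $\Lambda^\rho$ spans $\widehat U$ over $\R$ (full rank), that a symmetric bilinear form is determined by its associated quadratic form in characteristic zero (polarization), and that $\widehat W^{-1}\psi(Wz)(\widehat W^\rho)^{-1}$ is again $\rho$-self-adjoint for $z$ real — a self-adjoint operator is determined by its quadratic form; the $\C$-linear extension in $z$ then gives the identity on all of $V^\C$. The converse direction (that \eqref{linearSiegel} implies \eqref{lineartheta}) is obtained by reading the same chain of equalities backwards, noting that each summand on one side is carried bijectively to a summand on the other under $l\leftrightarrow(\widehat W^\rho)^{-1}l$.

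The only genuine obstacle is bookkeeping with the two lattices and the adjoints $\widehat W^\rho$; once \eqref{latticetrans} is in hand the substitution $l'=(\widehat W^\rho)^{-1}l$ makes the two series summand-for-summand comparable, and everything else is the polarization argument. I would write the proof as a single chain of equalities running from $\theta_{\widehat W\Lambda}(Wz,\widehat Wu)$ down to $\theta_\Lambda(z,u)$, valid under \eqref{linearSiegel}, and then observe that every implication in the chain is reversible because $\Lambda^\rho$ spans and $\rho$ is nondegenerate, giving the stated equivalence.
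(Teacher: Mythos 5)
Your proposal is correct and follows essentially the same route as the paper: expand both sides via \eqref{generalthetahat}, use \eqref{latticetrans} to substitute $l\mapsto(\widehat W^\rho)^{-1}l$, and compare Fourier coefficients in $u$ to reduce \eqref{lineartheta} to the operator identity \eqref{linearSiegel}. You in fact spell out the final polarization/self-adjointness/spanning step (passing from equality of the quadratic forms on $\Lambda^\rho$ to equality of the operators) more explicitly than the paper does, which is a welcome addition but not a different method.
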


\begin{proof} 
Assume that \eqref{linearSiegel} holds. Then, using \eqref{latticetrans} we obtain
\[
\begin{array}{rcl}
\theta_{\widehat {W}\Lambda} (W z, \widehat{W} u) &=& \sum_{l\in(\widehat {W} \Lambda)^\rho }\exp\bigl(i \pi \rho( \psi(W z)l + 2\widehat W  u, l)\bigr)\\
&=& \sum_{l\in(\widehat {W}^\rho)^{-1} \Lambda^\rho }\exp\bigl(i \pi \rho( \widehat W\psi( z)\widehat W^\rho  l + 2\widehat W u, l)\bigr)\\
&=& \sum_{l\in (\widehat {W}^\rho)^{-1} \Lambda^\rho }\exp\bigl(i \pi \rho(\psi( z)\widehat W^\rho l + 2 u,  \widehat W^\rho l)\bigr)\\
&=& \sum_{l^*\in\Lambda^\rho }\exp\bigl(i \pi \rho(\psi( z) l^* + 2 u,  l^*)\bigr)\\
&=& \theta_\Lambda(z,u),
\end{array}
\]
whence \eqref{lineartheta} holds.\\
For the converse direction, consider
\[
\begin{array}{rcl}
\theta_{\widehat {W}\Lambda} (W z, \widehat{W} u)
 &=& \sum_{l\in(\widehat {W} \Lambda)^\rho }\exp\bigl(i \pi \rho( \psi(W z)l + 2\widehat W u, l)\bigr)\\
&=& \sum_{l\in (\widehat {W}^\rho)^{-1} \Lambda^\rho }\exp\bigl(i \pi \rho(\widehat W^{-1}\psi(W  z) (\widehat W^\rho)^{-1}\widehat W^\rho  l + 2 u, 
\widehat W^\rho l)\bigr)\\
&=& \sum_{l^*\in\Lambda^\rho }\exp\bigl(i \pi 
\rho(\widehat W^{-1}\psi(W z) (\widehat W^\rho)^{-1}l^* + 2 u, l^*)\bigr).\\
\end{array}
\]
Comparing the Fourier coefficients of this series to those in the expansion of $\theta_\Lambda(z,u)$, one finds the identity
\[
\exp(\bigl(i \pi \rho(\widehat W^{-1}\psi(W z)(\widehat W^\rho)^{-1}l^* , l^*)\bigr)=
\exp(\bigl(i \pi \rho(\psi(z)l^* , l^*)\bigr)
\]
which implies \eqref{linearSiegel}.
\end{proof}

\begin{remark}\label{trivlin}
Since $(t\cdot I,\,\sqrt t\cdot I)$ obviously satisfies identity \eqref{linearSiegel} for every $t>0$, one has
\[
\theta_{\sqrt t\Lambda}(t\cdot z,\sqrt t\cdot u)=\theta_\Lambda (z,u).
\]
Considering $(I,-I)$ one recovers the fact that $\theta$ is an even function of $u$.
\end{remark}

It turns out that $(W,\widehat W)$ satisfies identity \eqref{linearSiegel} if and only if $(W^\sigma, \widehat{W}^\rho)$ is an automorphism of
$ \mathcal{D} (Y^\sigma, S)$.
Consider the group $\Gamma_0$ of these invertible linear transformations $(W,\widehat W)$.
\begin{theorem} \label{linearauto}
For a pair $ (W, \widehat{W}) $ of automorphisms of $V \times \widehat{U}$ the following statements are equivalent
\begin{itemize}

\item The pair $(W, \widehat{W})$ is contained in $\Gamma_0$.

\item $S( \widehat{W}^\rho u, \widehat{W}^\rho v) = W^\sigma S(u,v)$ for all $u,v \in U$.

\item $ (W^\sigma, \widehat{W}^\rho) $ is an automorphism of $ \mathcal{D} (Y^\sigma, S)$.

\end{itemize}
In particular, for every pair $(W,\widehat{W}) \in \Gamma_0$ the transformation $W$ is an automorphism of the cone $Y$.\\
Moreover, the defining relation for $\Gamma_0$ describes exactly all linear automorphisms of $ \mathcal{D} (Y^\sigma, S)$.

\end{theorem}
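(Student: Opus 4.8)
\emph{Plan.} I would establish the cycle of implications (first bullet)$\,\Leftrightarrow\,$(second bullet)$\,\Rightarrow\,$(third bullet)$\,\Rightarrow\,$(second bullet), and along the way the two supplementary statements. A running observation: $W$ and $\widehat W$ are real maps extended $\C$-linearly, so $W^\sigma$ is $\C$-linear on $V^\C$, $\widehat W^\rho$ is $\C$-linear on $U$, it preserves $\widehat U$ and satisfies the adjointness relation $\rho(\widehat W a,b)=\rho(a,\widehat W^\rho b)$, and $\overline{\widehat W^\rho v}=\widehat W^\rho\overline v$ for $v\in U$.

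\emph{First $\Leftrightarrow$ second.} Using only the defining relation $\sigma(S(u,v),z)=\rho(\psi(z)u,\overline v)$ together with that adjointness relation, one computes for every $z\in V^\C$
\[
\sigma\bigl(S(\widehat W^\rho u,\widehat W^\rho v),z\bigr)=\rho\bigl(\psi(z)\widehat W^\rho u,\widehat W^\rho\overline v\bigr)=\rho\bigl(\widehat W\psi(z)\widehat W^\rho u,\overline v\bigr),
\]
while $\sigma(W^\sigma S(u,v),z)=\sigma(S(u,v),Wz)=\rho(\psi(Wz)u,\overline v)$. Since $\sigma$ and the Hermitian extension of $\rho$ are nondegenerate, $S(\widehat W^\rho u,\widehat W^\rho v)=W^\sigma S(u,v)$ for all $u,v$ is equivalent to $\psi(Wz)=\widehat W\psi(z)\widehat W^\rho$ for all $z$, i.e.\ to \eqref{linearSiegel}. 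Then first $\Rightarrow$ $W\in\mathrm{Aut}(Y)$: if \eqref{linearSiegel} holds and $x\in Y$, then for $w\neq0$ one has $\rho(\psi(Wx)w,w)=\rho(\psi(x)\widehat W^\rho w,\widehat W^\rho w)>0$, as $\psi(x)$ is positive definite and $\widehat W^\rho$ is an invertible map of $\widehat U$; hence $Wx\in Y$, and applying the same to $(W^{-1},\widehat W^{-1})$ (which also satisfies \eqref{linearSiegel}) yields $W(Y)=Y$, so by cone duality $W^\sigma(Y^\sigma)=Y^\sigma$. Granting the second bullet we then get $\Im(W^\sigma z)-S(\widehat W^\rho u,\widehat W^\rho u)=W^\sigma\bigl(\Im z-S(u,u)\bigr)$, so the invertible linear map $(W^\sigma,\widehat W^\rho)$ maps $\mathcal{D}(Y^\sigma,S)$ onto itself, which is the third bullet.

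\emph{Third $\Rightarrow$ second.} Let $(W^\sigma,\widehat W^\rho)$ be an automorphism of $\mathcal D(Y^\sigma,S)$. For fixed $u$, the conditions $(z,u)\in\mathcal D$ and $(W^\sigma z,\widehat W^\rho u)\in\mathcal D$ read $\Im z\in S(u,u)+Y^\sigma$ and $\Im z\in(W^\sigma)^{-1}S(\widehat W^\rho u,\widehat W^\rho u)+(W^\sigma)^{-1}Y^\sigma$ respectively, and must describe the same subset of $V^\C$. Taking $u=0$ and using that the inverse map is also an automorphism gives $(W^\sigma)^{-1}Y^\sigma=Y^\sigma$; hence for every $u$ the translates $S(u,u)+Y^\sigma$ and $(W^\sigma)^{-1}S(\widehat W^\rho u,\widehat W^\rho u)+Y^\sigma$ coincide. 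Because $Y^\sigma$ is a regular (line-free) cone, a translate of $Y^\sigma$ determines its shift vector uniquely, so $W^\sigma S(u,u)=S(\widehat W^\rho u,\widehat W^\rho u)$ for all $u\in U$; polarizing $S$ and using $\C$-linearity of $W^\sigma$ and $\widehat W^\rho$ promotes this to $W^\sigma S(u,v)=S(\widehat W^\rho u,\widehat W^\rho v)$ for all $u,v$, i.e.\ the second bullet.

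\emph{Supplementary statements and main obstacle.} That $W$ is an automorphism of $Y$ whenever $(W,\widehat W)\in\Gamma_0$ was shown above. For the last assertion I would invoke the structure theory of Siegel domains of type II (\cite{Do;Habil} and the references therein): every linear automorphism of $\mathcal D(Y^\sigma,S)$ has the form $(z,u)\mapsto(Az,Bu)$ with $A$ a linear automorphism of the cone $Y^\sigma$, $B$ a $\C$-linear automorphism of $U$, and $S(Bu,Bv)=AS(u,v)$. Putting $W:=A^\sigma$, $\widehat W:=B^\rho$ gives $W^\sigma=A$, $\widehat W^\rho=B$, and $(W,\widehat W)\in\Gamma_0$ by the equivalence second $\Leftrightarrow$ first; conversely each $(W^\sigma,\widehat W^\rho)$ with $(W,\widehat W)\in\Gamma_0$ is such an automorphism by first $\Rightarrow$ third, so the defining relation of $\Gamma_0$ parametrizes exactly the linear automorphisms of $\mathcal D(Y^\sigma,S)$. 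I expect the two places needing genuine care to be the implication third $\Rightarrow$ second — where regularity of $Y^\sigma$ is what pins down the shift vector and polarization is needed to pass from the diagonal of $S$ to $S$ itself — and, in the last step, checking that $B$ (equivalently $\widehat W$) is compatible with the real form $\widehat U$ of $U$, which relies on $\mathcal D(Y^\sigma,S)$ having been built from the real data $\widehat U,\rho,\psi$ (in particular $S(\widehat U,\widehat U)\subseteq V$) rather than on a direct computation.
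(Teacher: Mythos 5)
Your proposal is correct, and for the core equivalences it coincides with the paper's argument: the identity $\sigma(S(\widehat W^\rho u,\widehat W^\rho v),z)=\rho(\widehat W\psi(z)\widehat W^\rho u,\overline v)$ versus $\sigma(W^\sigma S(u,v),z)=\rho(\psi(Wz)u,\overline v)$ together with nondegeneracy of $\sigma$ gives the first two bullets, and $WY=Y$, hence $W^\sigma Y^\sigma=Y^\sigma$, plus the group property of $\Gamma_0$ gives the third. Where you genuinely diverge is the closing of the cycle: the paper disposes of the implication from the third bullet back to the others, and of the final ``moreover'' clause, by citing known structure theory (\cite{Do;Habil}, Theorem 1.6), whereas you give a direct proof that the third bullet implies the second, using that an automorphism forces $S(u,u)+Y^\sigma=(W^\sigma)^{-1}S(\widehat W^\rho u,\widehat W^\rho u)+Y^\sigma$, that a translate of the regular (line-free) cone $Y^\sigma$ determines its shift vector uniquely, and then sesquilinear polarization; this argument is sound and buys self-containedness at the cost of a little extra work, while the paper's citation is shorter but leaves that step external. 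For the ``moreover'' assertion you, like the paper, ultimately rely on the cited result that every linear automorphism of a type II Siegel domain splits as $(z,u)\mapsto(Az,Bu)$ with $S(Bu,Bv)=AS(u,v)$; your closing remark that one must still check compatibility of $B$ with the real form $\widehat U$ (so that $\widehat W=B^\rho$ is an automorphism of $\widehat U$) is a genuine subtlety — note e.g.\ that $(I,e^{i\varphi}I)$ preserves $\mathcal D(Y^\sigma,S)$ for every real $\varphi$ — but since both you and the authors delegate exactly this normalization to \cite{Do;Habil}, it is not a gap relative to the paper's own proof.
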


\begin{proof}
Assume $( W,\widehat{W} ) \in \Gamma_0$. Then 
\[
\begin{array}{rcl}
\sigma ( S(\widehat{W}^\rho u, \widehat{W}^\rho v),x) &=& \rho(\psi (x) \widehat{W}^\rho u, \widehat{W}^\rho \bar{v} ) = 
\rho (\widehat{W} \psi(x) \widehat{W}^\rho u, \bar{v}) \\
   &=&\rho( \psi(Wx) u, \bar{v}) = \sigma (S(u,v), Wx)
\end{array}
\]
shows equivalence of the first two statements.
To verify the third  claim from the second, let $y \in Y$.  Then $\rho (\psi( W y) l,l) $ is, by the defining property of $Y$,  positive if and only if  
$\rho (\psi(y) \widehat{W}^\rho l,\widehat{W}^\rho l) $  is positive. Hence we obtain $W Y = Y$ and therefore also $ W^\sigma Y^\sigma = Y^\sigma$. As a consequence, for $(z,u) \in  \mathcal{D} (Y^\sigma, S)$ we consider
$(W^\sigma z, \widehat{W}^\rho u)$ and obtain $\Im(W^\sigma z) - S(\widehat W^\rho u,\widehat W^\rho u) = W^\sigma( \Im (z) - S(u,u)) \in W^\sigma Y^\sigma \subset Y^\sigma$. Since $\Gamma_0$ is a group the third claim follows.

The  remaining statements are well known (see e.g. \cite{Do;Habil}, Theorem 1.6).
\end{proof}

\begin{remark}
Considering the relationship between theta series and automorphic forms, transformations that respect lattices are of special relevance.
Thus, with regard to real translations the theta series \eqref{generalthetahat} is invariant only for elements of some  discrete subgroup. \\
For the transformation of theta functions with respect to linear transformations we have a general formula \eqref{lineartheta}, but this formula relates theta functions relative to different lattices, in general.
As a consequence, if one is interested in linear transformations leaving a given theta function invariant, then
in view of the first statement of Theorem \ref{linearauto}, one can only consider those
linear transformations $ (W^\sigma, \widehat{W}^\rho)  \in \Gamma_0$ 
for which the second factor $\widehat{W}$ maps the lattice $\Lambda$ bijectively onto itself.  Therefore the group of second components of linear 
transformations leaving \eqref{generalthetahat} invariant is a discrete subgroup of $GL(\widehat U)$. Since $W$ is uniquely determined by $\widehat W$ whenever $(W,\,\widehat W)\in\Gamma_0$ (see Theorem  \ref{linearauto}, second item and Proposition \ref{properties ofS}), 
the group $\Gamma_0^\Lambda$ of linear theta transformations for which the second factor leaves a given lattice invariant is also discrete. With a little more work, using Proposition \ref{properties ofS}, one can also show that the subgroup of $GL(V)$ formed by the first components of $\Gamma_0^\Lambda$ is discrete.\\
\end{remark}

\section{The full theta transformation formula}
\subsection{Quasi-symmetric domains}
The classical theta function on $\mathbb H\times \mathbb C$ satisfies the important theta trans\-formation formula which may be written as follows:
\[
\theta ( - z^{-1}, z^{-1} u) = \sqrt{-i z}\cdot\exp(i\pi z^{-1}u^2)\cdot \theta(z,u)
\]
This has been generalized for symmetric Siegel domains to a ``full theta trans\-formation formula'' of the type
\be \label{thetafulltransformationjordan}
\theta_{\Lambda^\rho} ( - z^{-1},  \psi (z)^{-1} u) = 
c_\Lambda\cdot H(z,u)\cdot\theta_{\Lambda}  (z,u)
\ee
with a function $H$ that is holomorphic and zero-free on the domain and some constant $c_\Lambda$ depending only on the lattice;  see Dorfmeister \cite{DoInv} and Resnikoff \cite{Res}. In \cite{DoInv} one has the factor (modulo some rewriting involving the Jordan homomorphism $\psi$)
\begin{equation}\label{specialfactor}
H(z,u)= \det(-i\psi(z))^{\frac{1}{2} }\exp({ i \pi \rho ( \psi (z)^{-1} u,u ) } ).
\end{equation}

 In this case, the space $V$ carries the structure of a formally real Jordan algebra (with the base point $e$ as  identity element) and $\psi:V \rightarrow {\rm Sym}(U)$ is a homomorphism from the Jordan algebra $V$ to the special Jordan algebra of $\rho$-symmetric transformations of $\widehat{U}$, i.e. we have 
$\psi(ab) = \frac{1}{2} (\psi(a) \psi(b) + \psi(b) \psi(a) ).$
More generally, these data characterize {\em quasi-symmetric} Siegel domains; see
\cite{Do;AmJMath} and Satake \cite{Satake}.

The main result of \cite{DoInv}, Section 6 may be stated as follows. (Some additional comments have been included in the statement; with regard to the Koecher function and the construction of Jordan algebras we refer to \cite{KoeMinn}, Ch.~I, \S4ff. and Ch.~II, \S5.)

\begin{theorem}
Assume $\mathcal{D} (Y^\sigma, S)$ is a quasi-symmetric Siegel domain. Then we can assume w.l.o.g. that $Y^\sigma = Y$ and $\psi: V \rightarrow {\rm Sym}(U)$ is a homomorphism of the Jordan algebra defined for $Y$ relative to the base point $e$ and the Koecher function $\omega$. Moreover we may assume w.l.o.g. that $\sigma$ is the natural associative positive definite quadratic form associated with $e$ and $\omega$. The theta function associated with these data satisfies (\ref{thetafulltransformationjordan}) with $H$ given by \eqref{specialfactor}.
\end{theorem}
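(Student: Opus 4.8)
The plan is to reduce the assertion to the main result of \cite{DoInv}, Section 6 (see also \cite{Res}) by verifying that, once quasi-symmetry is assumed, the normalizations accumulated in Section \ref{basicsec} and in Remark \ref{hermmaprem} place the present data exactly into the Jordan-algebraic framework used there. First I would make the three ``w.l.o.g.'' reductions precise. By the characterization of quasi-symmetric Siegel domains in \cite{Satake} and \cite{Do;AmJMath}, the hypothesis that $\mathcal{D}(Y^\sigma,S)$ is quasi-symmetric forces the cone $Y^\sigma$, and hence $Y$ (the dual of a symmetric cone being again symmetric), to be a symmetric cone; thus $Y$ is the positivity cone of a formally real Jordan algebra structure on $V$ with identity $e$, obtained from the Koecher function $\omega$ of $Y$ in the standard way (\cite{KoeMinn}), and the associated associative trace form $\sigma_0$ is positive definite with $Y^{\sigma_0}=Y$. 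Replacing $\sigma$ by $\sigma_0$ replaces $S$ by $A^{-1}S$ and $Y^\sigma$ by $A^{-1}Y^\sigma$ for a suitable positive definite $\sigma$-self-adjoint operator $A$, so that $\mathcal{D}(Y^{\sigma_0},S_0)$ is the linear image of $\mathcal{D}(Y^\sigma,S)$ under $(z,u)\mapsto(A^{-1}z,u)$ and is in particular again quasi-symmetric; hence we may assume $\sigma=\sigma_0$ and $Y^\sigma=Y$ from the start.

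Next, quasi-symmetry says precisely that the injective map $\psi$, self-adjoint with respect to $\rho$ and normalized so that $\psi(e)=I$ (available by Remark \ref{hermmaprem} and the subsequent normalization), is a homomorphism of Jordan algebras $\psi:V\to{\rm Sym}(U)$, i.e. $\psi(ab)=\frac12(\psi(a)\psi(b)+\psi(b)\psi(a))$. I would then record the elementary consequences: a unital Jordan homomorphism carries the Jordan inverse to the operator inverse, so $\psi(z^{-1})=\psi(z)^{-1}$ whenever $z$ is invertible in $V^\C$; moreover $\psi$ maps $Y$ into the positive definite transformations by the very definition of $Y$, so $-i\psi(z)$ is invertible and $\det(-i\psi(z))^{1/2}$ defines a holomorphic, zero-free branch on the tube $V\oplus iY$. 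At this point the series \eqref{generalthetahat} with $\widehat\theta_0=1$ coincides, up to the lattice-dependent constant $c_\Lambda$ accounting for the difference between the normalizations of \cite{Res} and \cite{DoInv}, with the theta series treated in \cite{DoInv}, Section 6.

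Finally I would invoke the transformation formula proved there: applying the non-affine automorphism $(z,u)\mapsto(-z^{-1},\psi(z)^{-1}u)$ of the symmetric Siegel domain, together with the Poisson-summation argument of \cite{DoInv}, yields \eqref{thetafulltransformationjordan} with $H$ as in \eqref{specialfactor}. I expect the main obstacle to be not analytic but a matter of bookkeeping: one must verify that the passage from $\Lambda$ to $\Lambda^\rho$, the choice $\widehat\theta_0=1$, and the replacement of $\sigma$ by the trace form $\sigma_0$ together match the conventions of \cite{DoInv} and \cite{Res} up to exactly the one constant $c_\Lambda$, and that the Jordan-algebraic quantities $z^{-1}$, $\psi(z)^{-1}$ and $\det(-i\psi(z))$ appearing above are literally those occurring in \eqref{specialfactor}. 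No input beyond \cite{DoInv} and \cite{Res} is needed; the content of the theorem is the observation that, in the presence of quasi-symmetry, the abstract hypotheses of Section \ref{basicsec} collapse onto the classical Jordan-algebra setting.
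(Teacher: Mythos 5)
Your proposal is correct and follows essentially the same route as the paper: the paper offers no independent proof of this theorem but presents it as a restatement of the main result of \cite{DoInv}, Section 6 (with the conventions of \cite{Res} absorbed into the constant $c_\Lambda$), and your argument — using the Satake--Dorfmeister characterization of quasi-symmetric domains to identify the Jordan-algebraic data and then invoking the Poisson-summation transformation formula of \cite{DoInv} — is precisely that reduction, with the w.l.o.g.\ normalizations spelled out.
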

Note that for the classical (symmetric) case as well as for the quasi-symmetric case we have $\psi(z^{-1}) = \psi(z)^{-1}$.

\subsection{General theta functions}\label{genthetasec}

Let us assume now we have a general theta function as defined in \eqref{generalthetahat} on  $\mathcal T(Y),$ satisfying the additional requirements introduced in Section \ref{basicsec}. 

 We want to discuss possible generalizations of the transformation formula \eqref{thetafulltransformationjordan} and to understand what consequences such generalizations imply. A priori we do not have a notion of inverse, but it is possible to generalize properties of $z\mapsto -z^{-1}$ in a natural manner. Indeed, this is an involution of $ \mathcal{H} (Y)$ in the Jordan setting, with the unique fixed point $ie$.\\
Thus, in the general case we first and foremost assume the existence of a non-affine holomorphic involution $j$ from $ \mathcal{H} (Y)$ to itself. By ``involution" we mean that $j$ is involutory and there exists an $e\in Y$ such that the derivative satisfies
\[
dj\left(ie\right)=-I.
\] 
Moreover we assume the existence of a holomorphic map $z\mapsto J(z)$ from $ \mathcal{H} (Y)$ to ${\rm End}\,(U)$, and a holomorphic function $H$ on $\mathcal{H} (Y)$ without zeros, independent of the lattice $\Lambda$, such that the identity
\begin{myequation} \label{theta-transformation}
\theta_{\Lambda^\rho} ( j(z),J(z)u) = 
c_\Lambda\cdot H(z,u)\cdot
\theta_{\Lambda}  (z,u)
\end{myequation}holds with some constant $c_\Lambda$ depending only on $\Lambda$. This seems to be an appropriate generalization of \eqref{thetafulltransformationjordan}, and it is also consistent with the transformation formula in Krieg \cite{KrDiss} , Ch.~IV, Theorem 2.2.  
We normalize $H$ by requiring
\begin{equation}\label{Hnorm}
H(ie,0)=1;
\end{equation}
this makes $c_\Lambda$ unique.
\begin{remark} {The above version of the transformation formula involves two crucial conditions with respect to the function $H$: First, it has no zeros; second, $H$ does not depend on the lattice $\Lambda$. We will require and employ both properties in the following. The question whether these restrictions are (in some way) natural is legitimate. But these properties are satisfied for all classical (including the Jordan) settings. Moreover,  in absence of any further restriction one could choose $j$ and $J$ quite arbitrarily and define a (meromorphic) function $H$ via \eqref{theta-transformation}; but this would be of little interest.}\end{remark}
\begin{remark} 
\begin{enumerate}[(a)]
\item More generally, some authors also discuss {\em partial involutions}, thus $j$ is assumed involutory but may have non-isolated fixed points; see e.g. Krieg \cite{KrDiss}, Dieckmann \cite{Dieck}, Dieckmann and Krieg \cite{DiKr}.
For the case of quaternion half spaces, all partial involutions were determined in \cite{KrDiss}, Ch.~II, Proposition 1.3e), starting from a given involution. In \cite{Dieck}, Kap.~4, partial involutions were determined for the exceptional tube domain. In the present paper we will clarify why only partial involutions can exist in the latter case, but we will not discuss partial involutions in their own right.
\item Kim's \cite{Kim} construction of singular modular forms  on the 27-dimen\-sional exceptional domain is related to partial involutions. As noted by Krieg \cite{KrManu}, these modular forms may be obtained from theta series on a 10-dimen\-sional boundary component; the underlying Jordan algebra  is the (special) algebra of Hermitian $2\times 2$ matrices over the octonions, thus a Peirce-1-space of the exceptional algebra. In turn, Dieckmann \cite{Dieck} Kap.~6 used Kim's and Krieg's results to construct Jacobi forms on the exceptional domain.
\end{enumerate}
\end{remark}

\subsection{The Nullwert}

In this subsection we do not require any properties of $j$, $J$ or of the function $H$ beyond those stated in subsection \ref{genthetasec}. As it turns out, these basic conditions already imply that $j$ is a Jordan inversion.\\
To see this, consider 
\eqref{theta-transformation} for the {\em Nullwert} with $u=0$, which yields
\be \label{theta-j-transformation-null}
\theta_{\Lambda^\rho} ( j(z),0) = 
c_\Lambda\cdot h(z)\cdot
\theta_{\Lambda}  (z,0)
\ee
for all $z\in \mathcal{H} (Y)$, with some holomorphic and zero-free function $h$.\\
Now, for \eqref{theta-j-transformation-null} to make sense for all $z\in \mathcal{H} (Y)$, as is expressly stipulated, one needs convergence of the series on the left hand side. This elementary requirement has strong consequences.
\begin{lemma}
The {\em Nullwert} 
\[
\theta_{\Lambda} (z,0) =  \sum_{l \in \Lambda^\rho} \exp\bigl(\pi i\cdot \rho( \psi(z)l , l)\bigr)
\]
 converges after the substitution $(z,0) \mapsto (j(z), 0)$  for all $z\in \mathcal{H} (Y)$ (if and) only if 
$j$ is a biholomorphic automorphism of the tube domain $ \mathcal{H} (Y)$.
\end{lemma}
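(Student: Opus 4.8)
The plan is to exploit the following simple mechanism: the series $\theta_\Lambda(z,0)=\sum_{l\in\Lambda^\rho}\exp(\pi i\,\rho(\psi(z)l,l))$ converges at a point $w\in V^\C$ if and only if the imaginary parts $\Im\rho(\psi(w)l,l)$ grow (suitably) to $+\infty$ as $l$ ranges over $\Lambda^\rho$; the rough necessary condition (as already observed in Remark~\ref{alternativerem}) is that $\Im\rho(\psi(w)l,l)>0$ for every nonzero $l\in\Lambda^\rho$, for otherwise the subseries over $\mathbb Z l$ has non-vanishing terms. Writing $w=x+iy$ with $x,y\in V$, one has $\Im\rho(\psi(w)l,l)=\rho(\psi(y)l,l)$ since $\psi(x)$ is $\rho$-self-adjoint and hence $\rho(\psi(x)l,l)$ is real. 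Thus convergence of the Nullwert at $w$ forces $\rho(\psi(y)l,l)>0$ for all nonzero $l\in\Lambda^\rho$, i.e. $y\in\widetilde Y=Y$ by the very definition of the cone in Condition~(iv). Conversely, the Proposition in Section~\ref{basicsec}(a) (together with the enlargement of the domain of convergence to $\mathcal T(Y)$ noted in \eqref{Tdef}) gives that the Nullwert converges precisely when $\Im(w)\in Y$, i.e. on $\mathcal H(Y)$.

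First I would make this precise: the set of $w\in V^\C$ at which $\theta_\Lambda(z,0)$ converges is exactly $\mathcal H(Y)=V\oplus iY$. The inclusion $\mathcal H(Y)\subseteq\{\text{convergence}\}$ is the content of Proposition~(a) extended as in \eqref{Tdef}. For the reverse inclusion, suppose $w=x+iy$ with $y\notin Y$; then by definition of $Y$ there is a nonzero $l^*\in\Lambda^\rho$ with $\rho(\psi(y)l^*,l^*)\le 0$, hence $\Im\rho(\psi(w)(nl^*),nl^*)=n^2\rho(\psi(y)l^*,l^*)\le 0$ for all $n\in\mathbb Z$, so $\bigl|\exp(\pi i\,\rho(\psi(w)(nl^*),nl^*))\bigr|=\exp(-\pi n^2\rho(\psi(y)l^*,l^*))\ge 1$; the terms do not tend to $0$ and the series diverges. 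This establishes that the convergence locus of $z\mapsto\theta_\Lambda(z,0)$ is exactly $\mathcal H(Y)$.

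Now apply this to $z\mapsto j(z)$. By hypothesis \eqref{theta-j-transformation-null} is stipulated to hold for all $z\in\mathcal H(Y)$, which \emph{requires} the left-hand side $\theta_{\Lambda^\rho}(j(z),0)$ to converge for every $z\in\mathcal H(Y)$; by the previous paragraph this says precisely $j(z)\in\mathcal H(Y)$ for all $z\in\mathcal H(Y)$, i.e. $j$ maps $\mathcal H(Y)$ into itself. Since $j$ is holomorphic and involutory ($j\circ j=\mathrm{id}$), $j$ is its own holomorphic inverse on $\mathcal H(Y)$, hence a biholomorphic automorphism of $\mathcal H(Y)$. Conversely, if $j$ is a biholomorphic automorphism of $\mathcal H(Y)$ then $j(z)\in\mathcal H(Y)$ whenever $z\in\mathcal H(Y)$, so the Nullwert converges after the substitution by the convergence statement proved above. (A priori $j$ is assumed to be a map from $\mathcal H(Y)$ to itself, so the "only if" direction really delivers the substantive information that $j$ extends/restricts to a \emph{biholomorphism}; if instead one reads $j$ as initially defined on a larger set, the argument above pins its image down to $\mathcal H(Y)$ exactly.)

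The main obstacle is not conceptual but one of bookkeeping: one must be careful that the "rough" necessary condition $\Im\rho(\psi(w)l,l)>0$ for all nonzero $l$ is genuinely equivalent — not merely necessary — to convergence of the Nullwert on the cone, so that the convergence locus is exactly $\mathcal H(Y)$ and not something larger. This is handled by combining the elementary divergence argument above (which shows $\Im(w)\notin Y$ forces divergence) with Proposition~(a) and the enlarged domain \eqref{Tdef} (which shows $\Im(w)\in Y$ forces convergence); no finer growth estimate on $\rho(\psi(y)l,l)$ is needed beyond positive-definiteness of $\psi(y)$ on $\Lambda^\rho$, which already guarantees a positive lower bound on a fundamental domain and hence at-least-Gaussian decay of the summands.
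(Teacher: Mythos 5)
Your proposal is correct and takes essentially the same approach as the paper: convergence of the substituted Nullwert forces $\Im\, j(z)\in Y$ (your divergence argument along $\mathbb{Z}l^*$ is exactly the mechanism of Remark~\ref{alternativerem}, which the paper cites), hence $j$ maps $\mathcal{H}(Y)$ into itself, and involutivity upgrades this to a biholomorphic automorphism. You simply spell out the convergence/divergence bookkeeping that the paper compresses into a single sentence.
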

\begin{proof}
Substituting the transformation into the theta series, it is necessary and sufficient for convergence that $\rho(\psi(j(z))v,v)$ has a positive imaginary part for all $z$ in $ \mathcal{H} (Y)$, and all nonzero $v\in U$. (Also see  Remark \ref{alternativerem}.) Therefore $j$ maps $ \mathcal{H} (Y)$ into itself and, being an involution, $j$ is an automorphism of the tube domain.
\end{proof}
\begin{theorem}\label{Jordaninverse}
If identity  \eqref{theta-j-transformation-null} holds on $ \mathcal{H} (Y)$ then $ \mathcal{H} (Y)$ is a symmetric tube domain. Hence there exists a formally real Jordan algebra structure $\mathfrak A$ on $V$ with unit element $e$ such that $\sigma$ may be chosen as the trace form of this algebra, $Y=Y^\sigma$ is (for instance) the set of all squares of invertible elements, and $j(z)=-z^{-1}$.
\end{theorem}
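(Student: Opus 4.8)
The plan is to exploit the fact, established in the preceding Lemma, that $j$ is already a biholomorphic automorphism of the tube domain $\mathcal{H}(Y)$, and to upgrade this to the statement that $\mathcal{H}(Y)$ is \emph{symmetric}, i.e. admits an involutive automorphism with an isolated fixed point. We already have such a candidate involution in hand: $j$ is involutory ($j\circ j=\mathrm{id}$) and its differential at the base point $ie$ equals $-I$, so $ie$ is an isolated fixed point of $j$. By the classical theory of bounded symmetric domains (Cartan), a bounded domain (and $\mathcal{H}(Y)$ is biholomorphic to a bounded domain, being an affinely homogeneous-in-the-real-direction tube over a regular cone — or, if one prefers, we first pass to the connected component of the automorphism group containing the translations and the dilation $z\mapsto tz$ of Remark~\ref{trivlin}) that admits an involutive automorphism with isolated fixed point is symmetric. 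Concretely: the group $G=\mathrm{Aut}(\mathcal{H}(Y))$ contains the real translations $\tau_a:z\mapsto z+a$ and the positive dilations, and together with $j$ these generate a group acting transitively on $\mathcal{H}(Y)$ — indeed $j\tau_a j$ gives a new family of automorphisms, and the standard argument (the Jordan-theoretic version of $(z,w)\mapsto$ Möbius transformations) shows one can move $ie$ to an arbitrary point of $\mathcal{H}(Y)$. Hence $\mathcal{H}(Y)$ is homogeneous, and being homogeneous with an involutive automorphism fixing $ie$ in isolation, it is a symmetric domain.

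Once $\mathcal{H}(Y)=V\oplus iY$ is known to be a symmetric tube domain, the structure theory of such domains — due to Koecher, and recalled in the references \cite{KoeMinn, Satake} — immediately provides a formally real (Euclidean) Jordan algebra structure $\mathfrak{A}$ on $V$ whose cone of squares is $Y$, realized so that the given base point $e$ is the Jordan unit; here one uses that the base point $e\in Y$ is already fixed and that the isotropy group of $G$ at $ie$ acts on $V$ by the automorphisms of the Jordan algebra, so the Koecher construction (via the Koecher norm function $\omega$ and its logarithmic Hessian at $e$) recovers $\mathfrak{A}$ intrinsically. The positive definite form $\sigma$ may then be replaced, without loss of generality, by the trace form $\tau$ of $\mathfrak{A}$ — this is the "natural associative positive definite quadratic form" associated with $e$ and $\omega$ appearing in the quoted theorem from \cite{DoInv} — and with that choice the dual cone $Y^\sigma$ coincides with $Y$, since the positivity cone of a Euclidean Jordan algebra is self-dual with respect to the trace form. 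Finally, the involution: in the Jordan setting $z\mapsto -z^{-1}$ is the \emph{unique} non-affine holomorphic involution of $\mathcal{H}(Y)$ whose differential at $ie$ is $-I$ (uniqueness follows because two automorphisms agreeing to first order at a point of a bounded symmetric domain agree globally), and our $j$ satisfies exactly these properties; hence $j(z)=-z^{-1}$.

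The main obstacle — and the step I would expect to need the most care — is the first one: passing rigorously from "$\mathcal{H}(Y)$ is a tube over a regular cone admitting an involutory biholomorphism with isolated fixed point at $ie$" to "$\mathcal{H}(Y)$ is a symmetric domain." The subtlety is that a priori we have no homogeneity of $Y$ under linear automorphisms; homogeneity of the tube domain must be manufactured from $j$ together with the real translations (and possibly dilations). The clean way is via bounded realization: fix a biholomorphism of $\mathcal{H}(Y)$ onto a bounded circled domain $\mathcal{B}\subset V^{\C}$ sending $ie$ to $0$; then $j$ becomes a biholomorphism of $\mathcal{B}$ with derivative $-I$ at $0$, and by uniqueness of such maps it must be $w\mapsto -w$ on $\mathcal{B}$ — so $\mathcal{B}$ is invariant under $w\mapsto -w$, i.e. the geodesic symmetry at $0$ is an automorphism. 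Combined with the transitive action of the real translations on the "horizontal" slices and an argument that the resulting automorphism group acts transitively (using that the transvections generated by $s_0=j$ and $s_p$ at nearby points $p$ move $0$ around), one concludes $\mathcal{B}$ is a bounded symmetric domain. One must take a little care that $\mathcal{H}(Y)$ really is biholomorphic to a bounded domain — this holds because $Y$ is a regular open convex cone (Lemma~(b) of the relevant earlier statement guarantees regularity), so $\mathcal{H}(Y)$ carries a bounded realization by the standard Cayley-type transform available for any such tube. Everything after this point is a citation to Koecher's construction and routine Jordan-algebra bookkeeping.
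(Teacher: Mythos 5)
Your reduction of the last assertions to Cartan's uniqueness theorem (once symmetry is known, $-z^{-1}$ is the unique automorphism fixing $ie$ with differential $-I$, so $j(z)=-z^{-1}$) and your appeal to Koecher's structure theory for the Jordan algebra, the unit $e$, and self-duality of $Y$ with respect to the trace form are all fine, and they match what the paper does after symmetry is established. The genuine gap is exactly at the step you yourself flag as the main obstacle, and your proposed repair does not close it. First, the principle you invoke --- ``a bounded domain admitting an involutive automorphism with an isolated fixed point is symmetric'' --- is false in general: symmetry requires a symmetry at \emph{every} point, equivalently homogeneity plus one symmetry, and a generic bounded domain invariant under $w\mapsto -w$ has no further automorphisms. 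Second, your ``clean way'' assumes a biholomorphism of $\mathcal{H}(Y)$ onto a bounded \emph{circled} domain sending $ie$ to $0$. A bounded realization exists because $Y$ is regular, but a circled one does not exist a priori: circledness forces an $S^1$-family of automorphisms fixing the center and is essentially a consequence of the symmetry one is trying to prove (Harish-Chandra realization), so this is circular. Third, the transitivity argument is not carried out: the translations and dilations only sweep out $V+i\,\mathbb{R}_{>0}e$, conjugation by the known automorphisms only produces symmetries at points of that orbit, and the ``standard argument (the Jordan-theoretic version of M\"obius transformations)'' again presupposes the Jordan structure whose existence is the content of the theorem. So the crucial implication ``one global symmetry at $ie$ $\Rightarrow$ $\mathcal{H}(Y)$ symmetric'' is left unproven in your sketch.

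The paper closes precisely this gap by a different route: since $j$ is a biholomorphic automorphism (by the preceding Lemma), it is an isometry of the Bergman metric fixing $ie$ with $dj(ie)=-I$; by uniqueness of isometries with prescribed value and differential at a point (Helgason \cite{Hel}, Ch.~I, Lemma 11.2) $j$ coincides locally with the geodesic symmetry at $ie$, so that symmetry extends globally; and then the nontrivial input is Rothaus \cite{Rot}, Theorem 18, which yields that a tube domain over a regular cone admitting such a global symmetry is symmetric. If you replace your circled-realization/transitivity sketch by this identification of $j$ with the Bergman geodesic symmetry together with the citation of Rothaus (or supply an actual proof of transitivity of the group generated by translations, dilations and $j$), your argument becomes complete; as written, the decisive step is missing.
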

\begin{proof} Since $j$ is a biholomorphic automorphism, it is an involutive isometry of $ \mathcal{H} (Y)$ relative to the Bergman metric, with $dj(ie)=-I$. Considering locally the geodesic symmetry $j^*$ relative to the Bergman metric at $ie$ one obtains $j^*\circ j^*={\rm id}$ and $dj^*Ü(ie)=-I$. Since isometries of Riemannian manifolds are uniquely determined by their derivative at some point (see Helgason \cite{Hel}, Ch.~I, Lemma 11.2), one has local equality $j=j^*$. Thus $j^*$ extends to a global biholomorphic map. By Rothaus \cite{Rot}, Theorem 18 the tube domain $ \mathcal{H} (Y)$ is symmetric. The remaining assertions follow, for instance, from Koecher \cite{KoeMinn}.
\end{proof}

\subsection{An incomplete theta transformation} 
In this subsection we explore to which extent the classical proofs of theta transformation formulas can be transferred to a more general setting. We follow the arguments  for the proof of Theorem 2.2 in Resnikoff \cite{Res} (see also Krieg \cite{KrDiss}, Chapter IV).
We first recall a well-known result (see e.g. Lemma 2.1 in  \cite{KrDiss}, Chapter IV). 
 
\begin{lemma}\label{integrallemma} For all $z\in V$ and all $w\in U$ one has
\[
\int_{\widehat U}\exp\left(i\pi\cdot\rho(\psi(z)(v+w),\,v+w)\right)\,{\rm d}v=(\det(-i\psi(z))^{-1/2},
\]
 given the Lebesgue measure on $\widehat U$ for which an orthonormal basis of eigenvectors with respect to $\rho$ spans a box of volume 1. (Any other normalization will yield some nonzero constant factor on the right hand side.)
\end{lemma}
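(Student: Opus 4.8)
The statement is the classical Gaussian integral formula, so the plan is to reduce it to the one-dimensional case by diagonalizing $\psi(z)$ with respect to $\rho$. First I would fix $z \in V$ and pick an orthonormal basis $e_1,\dots,e_n$ of $\widehat U$ consisting of eigenvectors of the self-adjoint map $\psi(z)$, with eigenvalues $\lambda_1,\dots,\lambda_n$; these are positive precisely when $z \in Y$, but for general $z \in V$ they are merely nonzero reals (invertibility of $\psi(z)$ for $z\in V$ follows since $\psi$ is injective and self-adjoint — actually one needs $\psi(z)$ invertible, which holds on a dense set and the integral is understood by analytic continuation from $Y$; I would phrase the computation for $z$ with $\Im\psi(z)$ positive definite, i.e.\ in the tube, and extend). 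Writing $v = \sum x_k e_k$ and $w = \sum c_k e_k$, the quadratic form splits as $\rho(\psi(z)(v+w),v+w) = \sum_k \lambda_k (x_k + c_k)^2$, and with the prescribed normalization of Lebesgue measure the integral factors as $\prod_k \int_{\R} \exp(i\pi \lambda_k (x_k+c_k)^2)\,dx_k$.

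Next I would invoke the standard one-variable Fresnel/Gaussian identity $\int_\R \exp(i\pi \lambda t^2)\,dt = (-i\lambda)^{-1/2}$ for $\lambda$ with positive imaginary part (and then by continuity for real $\lambda \neq 0$, with the branch of the square root chosen so that $(-i\lambda)^{-1/2}>0$ when $\lambda>0$); the shift by $c_k$ is irrelevant because the integrand is entire in $x_k$ and decays in the appropriate sectors, so a contour shift removes $c_k$. Taking the product over $k$ yields $\prod_k (-i\lambda_k)^{-1/2} = \bigl(\prod_k(-i\lambda_k)\bigr)^{-1/2} = \det(-i\psi(z))^{-1/2}$, since the determinant of $-i\psi(z)$ in the chosen basis is $\prod_k(-i\lambda_k)$; here one must check that the product of the principal square roots equals the principal square root of the product, which holds on the connected set of admissible $z$ by continuity, starting from $z = e$ where every $\lambda_k = 1$. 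This gives the claimed formula, and the remark about other normalizations of Lebesgue measure is immediate since rescaling the measure multiplies the integral by a constant.

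The only real subtlety — and the step I would be most careful about — is the handling of branches of the square root and the precise domain on which the identity is asserted: for $z \in V$ (real part of the tube variable) the eigenvalues $\lambda_k$ are real, $-i\lambda_k$ lies on the imaginary axis, and the Fresnel integral converges only conditionally, so one should really establish the formula first for $z$ in the tube $\mathcal H(Y)$ (where $\Im\psi(z) > 0$ guarantees absolute convergence and an unambiguous principal branch), obtain a holomorphic identity there, and then pass to the boundary value $z \in V$ by continuity. Since this is a well-known lemma (the paper cites Krieg \cite{KrDiss}, Chapter IV, Lemma 2.1), I would keep the write-up short: state the diagonalization, cite the scalar Gaussian integral, and note the branch convention, rather than belabor the convergence analysis. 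I expect no genuine obstacle — the content is entirely classical linear algebra plus the Fresnel integral — the only thing to get right is bookkeeping of the determinant's square root via connectedness and the base point $e$.
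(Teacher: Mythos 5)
Your proposal is correct and follows essentially the paper's own argument: diagonalize $\psi$ in a $\rho$-orthonormal eigenbasis, translate away $w$, reduce to one-dimensional Gaussian integrals, and extend by holomorphy. The paper's only streamlining is to invoke the identity theorem at the outset and prove the formula just for $z=iy\in iY$ and $w\in\widehat U$, where the integrand is a genuine positive-definite Gaussian; this sidesteps the Fresnel-integral, contour-shift and square-root-branch bookkeeping you discuss (and also the incorrect aside that injectivity of the linear map $\psi$ forces each $\psi(z)$, $z\in V$, to be invertible, which your own fallback to the tube domain already makes harmless).
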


\begin{proof} Since both sides are holomorphic functions
 it suffices to prove the assertion for $z=i\cdot y\in iY$ and $w\in \widehat{U}$. The substitution $x=v+w$ shows that the integral is equal to
\[
\int_{\widehat U}\exp\left(-\pi\cdot\rho(\psi(y)x,\,x)\right)\,{\rm d}x.
\]
Since $\psi(y)$ is self-adjoint and positive definite, a choice of an orthonormal basis of eigenvectors of $\psi(y)$ and the corresponding coordinate transformation will turn this integral into a product of well known one-dimensional integrals.
\end{proof}

We record a simple but useful identity (analogous to ``completion of the square'' in \cite{Res}): 
For all $y\in Y$ and $x,\,v\in U$ one finds
\begin{equation}\label{inverseid}
\begin{array}{rcl}
\rho\left(\psi(y)\left(x+\psi(y)^{-1}v\right),x+\psi(y)^{-1}v\right)&-&\rho\left(\psi(y)^{-1}v,v\right)\\
                                       &=& \rho\left(\psi(y)x+2v,x\right)
\end{array}
\end{equation}
Now we are able to devise a general (albeit ``incomplete") transformation formula:
\begin{theorem}\label{partialtheta} For all $(z,u)\in {\mathcal T}(Y)$ one has
\begin{equation}\label{partialthetaid}
\begin{array}{rcl}
\theta_\Lambda(z,u)&=&C_\Lambda\cdot \det(-i\psi(z))^{-1/2}\cdot \exp\left(-i\pi\rho(\psi(z)^{-1}u,u\right)\\
  &  &\quad  \times\sum_{d\in\Lambda}\exp\left(i\pi\rho(\psi(-z)^{-1}d+2\psi(z)^{-1}u,d)\right)
\end{array}
\end{equation}
with $C_\Lambda$ a positive constant.
\end{theorem}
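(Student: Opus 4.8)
The plan is to apply Poisson summation to the theta series, treating it as a sum over the dual lattice $\Lambda^\rho$ and producing a sum over $(\Lambda^\rho)^\rho = \Lambda$. First I would rewrite the summand of $\theta_\Lambda(z,u)=\sum_{l\in\Lambda^\rho}\exp(\pi i\rho(\psi(z)l+2u,l))$ as the value at the lattice point $l$ of the function $f(x)=\exp(\pi i\rho(\psi(z)x+2u,x))$ on $\widehat U$. Since $\psi(z)$ has positive-definite imaginary part for $z\in\mathcal H(Y)$ (more precisely, $z\in V+iY$ gives $\rho(\psi(z)l,l)$ positive imaginary part, by the defining property of $Y$ and Remark \ref{alternativerem}), this $f$ is Schwartz-class and Poisson summation applies: $\sum_{l\in\Lambda^\rho}f(l)=\frac{1}{\mathrm{vol}(\widehat U/\Lambda^\rho)}\sum_{d\in(\Lambda^\rho)^\rho}\widehat f(d)$, where $\widehat f$ is the Fourier transform with respect to $\rho$ and $(\Lambda^\rho)^\rho=\Lambda$. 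The volume factor is the positive constant $C_\Lambda$ (absorbing also the normalization of Lebesgue measure fixed in Lemma \ref{integrallemma}).

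Next I would compute $\widehat f(d)=\int_{\widehat U}\exp(\pi i\rho(\psi(z)x+2u,x))\exp(-2\pi i\rho(x,d))\,dx$. The exponent is $\pi i[\rho(\psi(z)x,x)+2\rho(u,x)-2\rho(d,x)]=\pi i[\rho(\psi(z)x,x)+2\rho(u-d,x)]$. Here I would invoke the completion-of-the-square identity \eqref{inverseid} with $y$ replaced by $z$ (valid by holomorphic continuation from $Y$) and $v=u-d$: this rewrites $\rho(\psi(z)x+2(u-d),x)$ as $\rho(\psi(z)(x+\psi(z)^{-1}(u-d)),x+\psi(z)^{-1}(u-d))-\rho(\psi(z)^{-1}(u-d),u-d)$. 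The second term pulls out of the integral, and the remaining integral over $x$ is, after the translation $v+w$ with $w=\psi(z)^{-1}(u-d)$, exactly the Gaussian integral of Lemma \ref{integrallemma}, giving $\det(-i\psi(z))^{-1/2}$. Hence $\widehat f(d)=\det(-i\psi(z))^{-1/2}\exp\left(-\pi i\rho(\psi(z)^{-1}(u-d),u-d)\right)$.

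Finally I would assemble the result: summing $\widehat f(d)$ over $d\in\Lambda$ and expanding the quadratic form, $-\rho(\psi(z)^{-1}(u-d),u-d)=-\rho(\psi(z)^{-1}u,u)+2\rho(\psi(z)^{-1}u,d)-\rho(\psi(z)^{-1}d,d)$. Pulling the $d$-independent factor $\exp(-\pi i\rho(\psi(z)^{-1}u,u))$ out front, the surviving summand is $\exp\left(\pi i[2\rho(\psi(z)^{-1}u,d)-\rho(\psi(z)^{-1}d,d)]\right)$. Using $\psi(-z)^{-1}=-\psi(z)^{-1}$ (by $\C$-linearity of $\psi$), this is $\exp\left(\pi i\rho(\psi(-z)^{-1}d+2\psi(z)^{-1}u,d)\right)$, matching the right-hand side of \eqref{partialthetaid}. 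For the equality to hold on all of ${\mathcal T}(Y)=(V+iY)\times U$ rather than just on a subdomain, I would note that both sides are holomorphic in $(z,u)$: the left side manifestly, the right side because $\det(-i\psi(z))$ is zero-free on $\mathcal H(Y)$ (as $\psi(z)$ is invertible there) so a holomorphic branch of the square root exists, and the series on the right converges locally uniformly by the same positive-imaginary-part argument applied to $\psi(-z)^{-1}$; identity on an open subset then propagates.

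The main obstacle I anticipate is justifying Poisson summation rigorously for complex $z$ — one must check the decay of both $f$ and $\widehat f$ and the absence of boundary issues; the clean route is to first establish the identity for purely imaginary $z=iy$, $y\in Y$, and $u\in\widehat U$ (where everything is a genuine real Gaussian and classical Poisson summation applies verbatim), and then extend to all $(z,u)\in{\mathcal T}(Y)$ by analytic continuation, exactly as in the proof of Lemma \ref{integrallemma}. A secondary technical point is fixing the branch of $\det(-i\psi(z))^{-1/2}$ consistently and pinning down the constant $C_\Lambda$ via the covolume of $\Lambda^\rho$; but these are bookkeeping matters rather than genuine difficulties.
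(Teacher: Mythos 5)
Your proposal is correct and follows essentially the same route as the paper: Poisson summation over $\Lambda^\rho$ (with the covolume supplying the constant $C_\Lambda$), evaluation of $\widehat f(d)$ via the completion-of-the-square identity \eqref{inverseid} together with the Gaussian integral of Lemma \ref{integrallemma}, all done first for $z=iy\in iY$, $u\in\widehat U$ and then extended by analytic continuation. The paper's proof is exactly this argument, so no further comparison is needed.
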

\begin{proof} Again it suffices to prove the claim for $z=iy\in iY$, and $u\in \widehat U$. For fixed $u$ and $y$, let
\[
f(x)=\exp\left(\pi\rho((-\psi(y)x+2iu,x)\right).
\]
Since $f$ is clearly an $L^1$ function on $\widehat{U}$, the Poisson summation formula (see e.g. Stein and Weiss \cite{StWe}, Appendix) shows 
\[
\theta_\Lambda(iy,u)=
\sum_{l\in\Lambda^\rho}f(l)=
c^*_\Lambda\cdot\sum_{d\in\Lambda}\widehat f(d)
\]
with
\[
\begin{array}{rcl}

\widehat f(d)&=& 
\int_{\widehat U}f(x)\exp\left(-2i\pi\rho(x,d)\right)\,{\rm d}x\\
   &=& 
\int_{\widehat U}\exp\left(\pi\rho\left(-\psi(y)x+2(iu-id),
x\right)\right)\,{\rm d}x
\end{array}
\]
and a positive constant $c_\Lambda^*$.
By identity \eqref{inverseid} with $v=iu-id$ one obtains furthermore
\[
\begin{array}{rcl}
\widehat f(d)&=&\exp\left(\pi\rho\left(\psi(y)^{-1}(iu-id),iu-id\right)\right)\\
   & &\quad\times \int_{\widehat U}f(x)\exp\left(-\pi\rho\left(\psi(y)t,t\right)\right)\,{\rm d}t
\end{array}
\]
with the substitution
\[
t:=x-\psi(y)^{-1}(iu-id).
\]
Lemma \ref{integrallemma} shows that the second factor is (up to some positive constant factor) equal to $\det \psi(y)^{-1/2}$. The first factor is equal to 
\[
\exp\left(-i\pi\rho(\psi(z)^{-1}u,u)\right)\cdot \exp\left(-i\pi\rho\left(\psi(z)^{-1}d -2i\psi(z)^{-1}u, d\right)\right)
\]
after substitution of $y=-iz$. Summing up the series finishes the proof.
\end{proof}
Note that for the case of the upper half plane $\mathbb H$ (and standard choices for $\rho$, $\psi$ and the lattice) this theorem provides a complete proof of the transformation formula. 
\subsection{The main theorem}
We will now employ Theorem \ref{partialtheta} to show that the Jordan algebra $\mathfrak A$ is actually special, and that the transformation $s: \,(z,u)\mapsto (j(z),\,J(z)u)$ is equal to the one given in \eqref{thetafulltransformationjordan}, up to replacing $J$ by its negative. (This ambiguity is inevitable, since the series \eqref{generalthetahat} is even in $u$.)
\begin{theorem}\label{specialjordan} Assume that the theta series \eqref{generalthetahat} satiesfies identities \eqref{theta1} through \eqref{theta-transformation}, and let
$\mathfrak A$ be the Jordan algebra determined in Theorem \ref{Jordaninverse}. Then the identity
\[
\psi(x^{-1})=\psi(x)^{-1}
\]
holds for all invertible $x\in\mathfrak A$; thus $\psi$ is a Jordan homomorphism from $\mathfrak A$ to ${\rm Sym}(\widehat U)$, and $\mathfrak A$ is special. 
Moreover, in the theta transformation formula \eqref{theta-transformation} one has $J(z)=\pm \psi(z)^{-1}$.
\end{theorem}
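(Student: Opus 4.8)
The plan is to argue in three stages: (1)~derive $\psi(x^{-1})=\psi(x)^{-1}$ from the Nullwert form of \eqref{theta-transformation} together with the incomplete transformation formula of Theorem~\ref{partialtheta}; (2)~deduce that $\psi$ is a unital Jordan homomorphism into ${\rm Sym}(\widehat U)$, whence $\mathfrak A$ is special; (3)~identify $J(z)$ up to sign. The feature that drives everything is that $H$ — hence $h(z):=H(z,0)$ and $\det(-i\psi(z))$ — does not depend on $\Lambda$. For stage~(1) I would set $u=0$ in \eqref{theta-transformation} to get \eqref{theta-j-transformation-null}, insert Theorem~\ref{partialtheta} (at $u=0$) for $\theta_\Lambda(z,0)$ on the right, and expand $\theta_{\Lambda^\rho}(-z^{-1},0)=\sum_{l\in\Lambda}\exp(-i\pi\rho(\psi(z^{-1})l,l))$ on the left by definition; this gives
\[
\sum_{l\in\Lambda}\exp\bigl(-i\pi\rho(\psi(z^{-1})l,l)\bigr)=\kappa_\Lambda(z)\sum_{l\in\Lambda}\exp\bigl(-i\pi\rho(\psi(z)^{-1}l,l)\bigr),
\]
with $\kappa_\Lambda(z)=c_\Lambda C_\Lambda\, h(z)\det(-i\psi(z))^{-1/2}$.

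Specializing to $z=iy$, $y\in Y$, and using $(iy)^{-1}=-iy^{-1}$ in $\mathfrak A^{\mathbb C}$ and $\psi(iy)=i\psi(y)$, this reads $\sum_{l\in\Lambda}e^{-\pi\rho(\psi(y^{-1})l,l)}=\kappa_\Lambda(iy)\sum_{l\in\Lambda}e^{-\pi\rho(\psi(y)^{-1}l,l)}$, both sums positive, so $\kappa_\Lambda(iy)>0$. Replacing $\Lambda$ by $t\Lambda$ rescales $\rho$ by $t^2$ in the exponents, while $\kappa_{t\Lambda}(iy)=\tfrac{c_{t\Lambda}C_{t\Lambda}}{c_\Lambda C_\Lambda}\kappa_\Lambda(iy)$ by $\Lambda$-independence of $h$ and $\det$. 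Evaluating at the base point $y=e$ (where $\psi(e^{-1})=\psi(e)=I$) the two series coincide, so $\kappa_\Lambda(ie)=1$, hence $\kappa_{t\Lambda}(ie)=1$, hence $c_{t\Lambda}C_{t\Lambda}=c_\Lambda C_\Lambda$ and $\kappa_{t\Lambda}(iy)=\kappa_\Lambda(iy)$ for all $t$; letting $t\to\infty$ in the rescaled identity then forces $\kappa_\Lambda(iy)=1$, so
\[
\sum_{l\in\Lambda}e^{-\pi t^2\rho(\psi(y^{-1})l,l)}=\sum_{l\in\Lambda}e^{-\pi t^2\rho(\psi(y)^{-1}l,l)}\qquad(y\in Y,\ t>0,\ \text{every lattice }\Lambda).
\]
By linear independence of the functions $t\mapsto e^{-t^2\mu}$ for distinct $\mu$, the multisets $\{\rho(\psi(y^{-1})l,l):l\in\Lambda\}$ and $\{\rho(\psi(y)^{-1}l,l):l\in\Lambda\}$ agree for every $\Lambda$, in particular their minima over $\Lambda\setminus\{0\}$ agree; taking, for a given nonzero $v\in\widehat U$, the lattice $\mathbb Z v\oplus N\Lambda'$ with $\Lambda'$ in a complement of $\mathbb R v$ and $N$ large makes $\pm v$ the unique minimal vectors for both forms, so $\rho(\psi(y^{-1})v,v)=\rho(\psi(y)^{-1}v,v)$. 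Hence $\psi(y^{-1})=\psi(y)^{-1}$ for $y\in Y$, and by the identity theorem for all invertible $x\in\mathfrak A$.

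For stage~(2), comparing coefficients in $\psi((e-tx)^{-1})=\psi(e-tx)^{-1}$ — left side $\sum_k t^k\psi(x^k)$ by power-associativity, right side $\sum_k t^k\psi(x)^k$ — gives $\psi(x^2)=\psi(x)^2$, and linearization yields $\psi(ab)=\tfrac12(\psi(a)\psi(b)+\psi(b)\psi(a))$, so $\psi$ is a unital Jordan homomorphism into ${\rm Sym}(\widehat U)$; since the latter, with product $\tfrac12(AB+BA)$, is special and $\psi$ is injective, $\mathfrak A$ is special (in particular not the exceptional algebra, re-proving the non-existence statement). For stage~(3), now that $\psi(z^{-1})=\psi(z)^{-1}$, Theorem~\ref{partialtheta} becomes a genuine transformation formula $\theta_{\Lambda^\rho}(-z^{-1},\psi(z)^{-1}u)=c'_\Lambda H_0(z,u)\theta_\Lambda(z,u)$ with $H_0(z,u)=\det(-i\psi(z))^{1/2}\exp(i\pi\rho(\psi(z)^{-1}u,u))$; dividing \eqref{theta-transformation} by it, the ratio $\theta_{\Lambda^\rho}(-z^{-1},J(z)u)/\theta_{\Lambda^\rho}(-z^{-1},\psi(z)^{-1}u)$ is a zero-free holomorphic function of $u$, so the two theta functions have the same zero set. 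With $v=\psi(z)^{-1}u$, the $\mathbb C$-linear map $C:=J(z)\psi(z)$ therefore maps the zero divisor of $\theta_{\Lambda^\rho}(-z^{-1},\cdot)$ onto itself, and since that divisor carries the principal polarization of the abelian variety $U/(\Lambda^\rho+\psi(-z^{-1})\Lambda)$, $C$ is an automorphism of this principally polarized abelian variety. Choosing $\Lambda$ so that this automorphism group is $\{\pm\mathrm{id}\}$ — possible since for suitable $\Lambda$ the variety is simple with endomorphism ring $\mathbb Z$ — and using that $C$ does not depend on $\Lambda$, we obtain $C=\pm\mathrm{id}$, i.e.\ $J(z)=\pm\psi(z)^{-1}$, the sign being constant by connectedness of $\mathcal H(Y)$ and irremovable because $\theta$ is even in $u$.

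I expect the crux to be stage~(1): recognizing that the $\Lambda$-independence of $H$ is exactly what makes the rescaling $\Lambda\mapsto t\Lambda$ (used to normalize $\kappa_\Lambda$ to $1$) and the short-vector choice of $\Lambda$ (used to pass from equality of distance multisets to the pointwise identity) legitimate. A secondary technical point is checking that the family of abelian varieties $U/(\Lambda^\rho+\psi(-z^{-1})\Lambda)$ really does contain members with automorphism group $\{\pm\mathrm{id}\}$.
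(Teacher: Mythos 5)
Your stages (1) and (2) are essentially correct, and stage (1) is a genuinely different route from the paper's. You extract $\psi(y^{-1})=\psi(y)^{-1}$ from the Nullwert identity \eqref{theta-j-transformation-null} alone: the scalings $\Lambda\mapsto t\Lambda$ (legitimate precisely because $H$ and $\det(-i\psi(z))$ are $\Lambda$-independent) normalize your factor $\kappa_\Lambda$ to $1$, and the lattices $\mathbb Z v\oplus N\Lambda'$ with $N$ large make $\pm v$ the unique minimal vectors of both quadratic forms, so equality of minima gives $\rho(\psi(y^{-1})v,v)=\rho(\psi(y)^{-1}v,v)$ for every $v$, and self-adjointness plus polarization yields the operator identity. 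The paper instead obtains this identity only at the end, from a Fourier-coefficient comparison of the full $u$-dependent identity \eqref{interim4}, after first identifying $J(z)$ and the factor $H$; your argument buys the inversion formula earlier and with less apparatus, at the price of not determining $H(z,u)$ (which the statement does not require). Stage (2) coincides with the paper's geometric-series/polarization argument.

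Stage (3) contains the genuine gap. First, you implicitly assume $J(z)$ is invertible when you let $C=J(z)\psi(z)$ act on the zero divisor; this needs an argument (a singular $J(z)$ would make the left zero set invariant under translation by the complex subspace $\ker J(z)$, which has to be excluded). More seriously, the whole identification rests on the assertion that some lattice $\Lambda$ makes $U/(\Lambda^\rho+\psi(-z^{-1})\Lambda)$ a principally polarized abelian variety with automorphism group $\{\pm\mathrm{id}\}$. This is exactly the unproved point: the period matrix is constrained to the low-dimensional family $A^{\rho}\psi(-z^{-1})A$ determined by the image of $\psi$, so the ``generic ppav has only $\pm\mathrm{id}$'' heuristic does not apply automatically; indeed in the classical Hermitian and quaternionic cases $\psi(-z^{-1})$ commutes with integral complex structures, so for adapted lattices the automorphism group is strictly larger, and one must genuinely prove that suitably non-adapted lattices kill all extra automorphisms. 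You flag this yourself, but without it the conclusion $C=\pm\mathrm{id}$ is not established. The paper avoids all of this machinery: it first extends exactly your $t\Lambda$, $t\to\infty$ limiting argument to general $u$ (steps (ii)--(iv) of its proof), which shows that the full prefactor equals $1$ and hence that the two series in \eqref{interim4} are literally equal; then, writing $v=\psi(z)^{-1}u$, the left side is $\Lambda^\rho$-periodic in $v$, so the right side is too, forcing $K(z)=J(z)\psi(z)$ to map $\Lambda^\rho$ into itself for every lattice; continuity and connectedness make $K$ constant with integer matrix in every basis, hence $K=k\cdot I$ with $k\ge 0$, and uniqueness of Fourier coefficients gives $k=1$, i.e. $J(z)=\pm\psi(z)^{-1}$. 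Replacing your abelian-variety step by this elementary periodicity and Fourier argument (which also settles invertibility of $J(z)$ en route) closes the gap.
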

\begin{proof} \begin{enumerate}[(i)]
\item Combine \eqref{theta-transformation}, Theorem \ref{Jordaninverse} and Theorem \ref{partialtheta} to obtain 
\[
\begin{array}{rcl}
 & & \sum_{d\in\Lambda}\exp\left(i\pi\rho(\psi(-z^{-1})d+ 2 J(z)u,d)\right)\\
 &=&c_\Lambda C_\Lambda\cdot H(z,u)  \det(-i\psi(z))^{-1/2}\cdot \exp\left(-i\pi\rho(\psi(z)^{-1}u,u\right)\\
  &  &\qquad  \times\sum_{d\in\Lambda}\exp\left(i\pi\rho(\psi(-z)^{-1}d+2\psi(z)^{-1}u,d)\right).
\end{array}
\]
Setting $z=ie,\,u=0$ and using \eqref{Hnorm}  one sees 
\[
c_\Lambda C_\Lambda =1
\]
for every lattice.
\item Now let $z=iy$, with $y\in Y =Y^\sigma$ fixed. Then the above identity becomes
\begin{equation}\label{interim1}
\begin{array}{rcl}
& & \sum_{d\in\Lambda}\exp\left(-\pi\rho(\psi(y^{-1})d,d)\right)\exp\left(2i \pi \rho(J(iy)u,d)\right)  \\
  & = &Q(iy, u) \cdot\sum_{d\in\Lambda}\exp\left(-\pi\rho(\psi(y)^{-1}d,d)\right)\exp\left(-2 \pi \rho(\psi(y)^{-1}u,d)\right).
\end{array}
\end{equation}
with 
\[
Q(z,u):=H(z, u) \det(-i\psi(z))^{-1/2}\cdot \exp\left(-i\pi\rho(\psi(z)^{-1}u,u\right)
\]
independent of the lattice $\Lambda$.
Passing from $\Lambda$ to $t \Lambda$ with $t>0$, we obtain
\begin{equation}\label{interim2}
\begin{array}{rcl}
& & \sum_{d\in\Lambda}\exp\left(-t^2\pi\rho(\psi(y^{-1})d,d)\right)\exp\left(2ti \pi \rho(J(iy)u,d)\right)  \\
  & = &Q(iy, u) \cdot\sum_{d\in\Lambda}\exp\left(-t^2\pi\rho(\psi(y)^{-1}d,d)\right)\exp\left(-2t \pi \rho(\psi(y)^{-1}u,d)\right).
\end{array}
\end{equation}
\item We record an auxiliary result. First note that for any positive definite $A\in{\rm Sym}(\widehat U)$, any endomorphism $B$ of $U$ and any fixed $u\in U$ there exists a constant $r>0$ such that 
\[
\begin{array}{rcl}
\exp\left(-t^2\rho(Ad,d)\right)&\leq &\exp(-rt^2\cdot \Vert d\Vert^2);\\
{|{\exp\left(t \rho(Bu,d)\right)}|}&\leq &\exp(rt\cdot \Vert d\Vert)
\end{array}
\]
hold for all $d\in\Lambda$. {Let 
\[
\delta:=\min\left\{\Vert d\Vert;\,d\in\Lambda\setminus\{0\}\right\},\quad R:=r/(2\delta).
\]
Then for all $t\geq 1/(2\delta)$ one has
\[
\exp(-rt^2\cdot \Vert d\Vert^2)\cdot \exp(rt\cdot \Vert d\Vert)\leq \exp(-R\cdot \Vert d\Vert^2)\cdot \exp(R\cdot \Vert d\Vert)
\]
by monotonicity arguments. We claim that for every sequence $(t_k)$ with $t_k\geq 1/(2\delta)$ and $t_k\to\infty$ as $k\to\infty$ one has
\[
\lim_{k\to\infty}\sum_{d\in \Lambda}\exp\left(-t_k^2\,\rho(Ad,d)\right)\exp\left(t_k\, \rho(Bu,d)\right)=1.
\]
This follows from the dominated convergence theorem (see e.g. Bartle \cite{bar}, Theorem 5.6) applied to the counting measure on $\Lambda$, since the estimates above provide a majorant 
\[
g:\,\Lambda\to\mathbb R, \quad d\mapsto \exp\left(-R\Vert d\Vert^2)\right)\exp\left(R\Vert d\Vert\right),
\]
which is integrable; i.e.
$ \sum_{d\in \Lambda}\exp\left(-R\Vert d\Vert^2)\right)\exp\left(R\Vert d\Vert\right) <\infty$.
}Furthermore the estimates show that
\[
\exp\left(-t_k^2\,\rho(Ad,d)\right)\exp\left(t_k\, \rho(Bu,d)\right)\to 0 \quad(k\to\infty)
\]
for all nonzero $d\in\Lambda$.
\item Applying this result to each series in \eqref{interim2}, one finds $Q(iy,u)=1$ for all $y$ and $u$, hence
\begin{equation}\label{interim3}
H(z,u)=\det(-i\psi(z))^{1/2} \cdot \exp({ i \pi \rho ( \psi (z)^{-1} u,u ) } )
\end{equation}
by the identity theorem for holomorphic functions.
\item Reconsidering \eqref{interim1}, one now finds the identity
\[
\begin{array}{rcl}
& &\sum_{d\in\Lambda}\exp\left(i\pi \rho\left(\psi(-z)^{-1}d+2\psi(z)^{-1}u,\,d\right)\right)\\
& &=\sum_{d\in\Lambda}\exp\left(i\pi \rho\left(\psi(-z^{-1})d+2J(z)u,\,d\right)\right).
\end{array}
\]
Letting $v:=\psi(z)^{-1}u$ we have
\begin{equation}\label{interim4}
\begin{array}{rcl}
& &\sum_{d\in\Lambda}\exp\left(i\pi \rho\left(\psi(-z)^{-1}d+2v,\,d\right)\right)\\
&  &=\sum_{d\in\Lambda}\exp\left(i\pi \rho\left(\psi(-z^{-1})d+2J(z)\psi(z)v,\,d\right)\right).
\end{array}
\end{equation}
Since the left hand side is $\Lambda^\rho$-periodic in $v$, so is the right-hand side; hence every linear map $K(z):=J(z)\psi(z)$ is contained in the discrete semigroup which sends $\Lambda^\rho$ to itself. Thus the matrix representation of $K(z)$ with respect to a lattice basis of $\Lambda^\rho$ has integer entries, whence by connectedness and continuity $K(z)=K$ is constant. This property holds for any lattice in $\widehat U$, therefore the representing matrix of $K$ has integer entries with respect to any basis of $\widehat U$, which implies that $K=k\cdot I$ for some integer $k$. We may assume $k\geq 0$ since the series is an even function of $v$.
\item Now rewrite \eqref{interim4} in the form
\[
\begin{array}{rcl}
& &\sum_{d\in\Lambda}\exp\left(i\pi \rho\left(\psi(-z)^{-1}d,\,d\right)\right)\exp\left(2i\pi\cdot \rho\left(v,\,d\right)\right)\\
& &=\sum_{d\in\Lambda}\exp\left(i\pi \rho\left(\psi(-z^{-1})d,\,d\right)\right)\exp\left(2i\pi k \cdot \rho\left(v,\,d\right)\right).
\end{array}
\]
Uniqueness of Fourier coefficients shows $k=1$ and the identities 
\[
 \exp\left(i\pi\rho\left(\psi(-z^{-1})d,\,d\right)\right)=\exp\left(i\pi\rho\left(\psi(-z)^{-1}d,\,d\right)\right),
\]
for all $z$ and $d$.
By continuity arguments one has
\[
\rho\left(\psi(-z^{-1})d,\,d\right)=\rho\left(\psi(-z)^{-1}d,\,d\right)
\]
for all $z$ in a neighborhood of $ie$, and by self-adjointnesss the asserted identity
\[
\psi(-z^{-1})=\psi(-z)^{-1}; \text{  thus  }\psi(z^{-1})=\psi(z)^{-1}
\]
follows. 
\item We finish the proof with a familiar argument: Expanding both sides of
\[
\psi\left((e-w)^{-1}\right)=\left(\psi(e-w)^{-1}\right)=\left(I-\psi(w)\right)^{-1}
\]
for $w$ near $0$ in $V$ by using the geometric series (recall the power-associativity of $\mathfrak A$; see \cite{KoeMinn}). Thus we have
\[
\psi(e+w+w^2+\cdots)=I+\psi(w)+\psi(w)^2+\cdots,
\]
which implies the polynomial identity
\[
\psi(w^2)=\psi(w)^2
\]
for all $w$ in a neighborhood of $0$, and hence on all of $V$. Polarization shows 
\[
2\psi(w_1w_2)=\psi(w_1)\psi(w_2)+\psi(w_2)\psi(w_1)
\]
for all $w_1,\,w_2\in V$, whence $\psi$ is a Jordan homomorphism. Injectivity of $\psi$ implies that $\mathfrak A$ is special.
\end{enumerate}
\end{proof}
One particular consequence of this general description should be mentioned explicitly.
\begin{corollary}
On the exceptional tube domain (which is defined over the exceptional simple formally real Jordan algebra)  there exists no theta series which satisfies identities \eqref{theta1} through \eqref{theta-transformation}.
\end{corollary}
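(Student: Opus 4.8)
The plan is to deduce the corollary directly from Theorem~\ref{specialjordan}. Suppose, for contradiction, that a theta series of the form \eqref{generalthetahat} over the exceptional tube domain satisfies all of \eqref{theta1}--\eqref{theta-transformation}. The starting point is that the underlying cone $Y^\sigma$ is, by hypothesis, the positivity cone of the $27$-dimensional exceptional simple formally real Jordan algebra, and $\mathcal{H}(Y^\sigma)$ is the associated exceptional symmetric tube domain. The existence of the transformation formula \eqref{theta-transformation} with $j$, $J$, $H$ as stipulated in subsection~\ref{genthetasec} is precisely what feeds into the Nullwert discussion: by the Lemma preceding Theorem~\ref{Jordaninverse}, convergence of the left-hand side of \eqref{theta-j-transformation-null} forces $j$ to be a biholomorphic involution of $\mathcal{H}(Y^\sigma)$ with $dj(ie)=-I$, and then Theorem~\ref{Jordaninverse} identifies this Jordan structure on $V$ (matching the exceptional algebra, since the cone determines the algebra up to isomorphism) with $j(z)=-z^{-1}$.

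Next I would invoke Theorem~\ref{specialjordan}: once \eqref{theta1}--\eqref{theta-transformation} hold, the map $\psi:V\to\mathrm{Sym}(\widehat U)$ satisfies $\psi(x^{-1})=\psi(x)^{-1}$ for all invertible $x$, hence (by the polarization argument at the end of that proof) is a homomorphism of Jordan algebras from $\mathfrak A$ into the special Jordan algebra $\mathrm{Sym}(\widehat U)$ of $\rho$-self-adjoint operators on $\widehat U$. Moreover $\psi$ is injective by Remark~\ref{hermmaprem}. An injective Jordan homomorphism into a special Jordan algebra exhibits $\mathfrak A$ as a subalgebra of a special algebra, so $\mathfrak A$ is itself special. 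But the $27$-dimensional exceptional simple formally real Jordan algebra is, by the classical theorem of Albert, \emph{not} special: it admits no injective homomorphism into any $\mathrm{Sym}(W)$. This contradiction shows no such theta series can exist.

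Concretely the write-up is: assume a theta series as in \eqref{generalthetahat} on the exceptional tube domain satisfies \eqref{theta1} through \eqref{theta-transformation}; apply Theorem~\ref{Jordaninverse} to see that the Jordan algebra $\mathfrak A$ attached to $Y^\sigma$ is the exceptional algebra and $j(z)=-z^{-1}$; apply Theorem~\ref{specialjordan} to conclude that $\mathfrak A$ is special; observe that this contradicts the exceptionality (i.e.\ non-speciality, by Albert's theorem) of the $27$-dimensional simple Jordan algebra. Hence the assumption was untenable.

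I do not anticipate a genuine obstacle here: the corollary is essentially a one-line logical consequence of Theorem~\ref{specialjordan} combined with the standard fact that the exceptional Jordan algebra is not special. The only point deserving a word of care is making explicit that the Jordan algebra produced by Theorem~\ref{Jordaninverse} from the exceptional cone really is (isomorphic to) the exceptional simple algebra --- this follows because a formally real Jordan algebra is determined up to isomorphism by its positivity cone together with the base point, and the construction in Theorem~\ref{Jordaninverse} recovers exactly that structure. Everything else is a direct citation.
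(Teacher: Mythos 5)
Your argument is correct and is exactly the route the paper intends: the corollary is stated as an immediate consequence of Theorem~\ref{specialjordan}, which shows that the injective map $\psi$ makes $\mathfrak A$ special, contradicting the non-speciality (Albert) of the exceptional $27$-dimensional algebra attached to the exceptional cone via Theorem~\ref{Jordaninverse}. Your extra remark that the cone with base point determines the Jordan algebra is a reasonable point of care but does not change the substance.
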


\end{document}